\newtheorem{thm}{Theorem}[section]
\newtheorem{cor}[thm]{Corollary}
\newtheorem{lem}[thm]{Lemma}
\newtheorem{prop}[thm]{Proposition}
\theoremstyle{definition}
\newtheorem{defn}[thm]{Definition}
\theoremstyle{remark}
\newtheorem{rem}[thm]{Remark}
\numberwithin{equation}{section}
\def\XXint#1#2#3{{\setbox0=\hbox{$#1{#2#3}{\int}$}
		\vcenter{\hbox{$#2#3$}}\kern-.5\wd0}}
\begin{document}
	
	%
	%
	%
	%
	%
	%
	%
	%
	%
	
	
	\title{Stochastic-periodic homogenization of Poisson-Nernst-Planck equations in porous media}
	\author{ Franck Tchinda,\footnote{University of Maroua, Department of Mathematics and Computer Science, P.O. box 814, Maroua, Cameroon, email:takougoumfranckarnold@gmail.com}
	\,	Joel Fotso Tachago,\footnote{University of Bamenda, 	Higher Teachers Training College, Department of Mathematics
			P.O. Box 39
			Bambili, Cameroon, email:fotsotachago@yahoo.fr}  \\
			Joseph Dongho,\footnote{University of Maroua, Department of Mathematics and Computer Science, P.O. box 814, Maroua, Cameroon, email:joseph.dongho@fs.univ-maroua.cm}
		  \, 	and  Fridolin Tchangnwa Nya \footnote{University of Maroua, Department of Physics, P.O. box 814, Maroua, Cameroon, email:  fridolin.tchangnwa-nya@fs.univ-maroua.cm }  }
		
		\date{ } 
		
		\maketitle
	\begin{abstract} 
		
		This paper is devoted to the study of the stochastic-periodic homogenization of Poisson-Nernst-Planck equations in porous media. It is shown by the stochastic two-scale convergence method extended to periodic surfaces that  results in a global homogenized problem having suitable coefficients.
		
		\medskip
		\noindent \textbf{Keywords} : stochastic-periodic homogenization, stochastic two-scale convergence, Poisson-Nernst-Planck equations, porous media, global homogenized problem.
		
		\medskip
		\noindent \textbf{MSC 2020} :  76M50, 35B40.
	\end{abstract}

	
	
	
	
	
	

	
	
	
	
	
	\maketitle


	\section{Introduction} \label{sect1} 
	
	We are interested in the asymptotic behavior (as $0<\varepsilon\rightarrow 0$) of the sequence of solutions $(\varpi_{\pm,\varepsilon}, \varUpsilon_{\varepsilon})$ of the  Poisson-Nernst-Planck equations (\ref{c1tc1})-(\ref{c1tc3}), with stochastic-deterministic parameters in porous periodic media :
	\begin{equation}\label{c1tc1}
		\dfrac{\partial \varpi_{\pm,\varepsilon}}{\partial t} - D_{\pm} \textup{div} \left(\nabla\varpi_{\pm,\varepsilon} + \dfrac{F}{R\varTheta}z_{\pm}\varpi_{\pm}\nabla\varUpsilon_{\varepsilon}\right) = 0 \quad \textup{in} \; Q_{\varepsilon}^{f}\times\Lambda,
	\end{equation}
	
	\begin{equation}\label{c1tc2}
		\textup{div}\left(\varrho_{f}\left(\mathcal{T}\left(\frac{x}{\varepsilon}\right)\omega,\frac{x}{\varepsilon^{2}}\right)\nabla\varUpsilon_{\varepsilon}\right) = -F \left(z_{+}\varpi_{+,\varepsilon} - z_{-}\varpi_{-,\varepsilon}\right) \;\; \textup{in} \; Q_{\varepsilon}^{f}\times\Lambda,
	\end{equation}
	
	\begin{equation}\label{c1tc3}
		\textup{div}\left(\varrho_{s}\left(\mathcal{T}\left(\frac{x}{\varepsilon}\right)\omega,\frac{x}{\varepsilon^{2}}\right)\nabla\varUpsilon_{\varepsilon}\right) = 0 \quad \textup{in} \; Q_{\varepsilon}^{s}\times\Lambda.
	\end{equation}
Let us specify data in (\ref{c1tc1})-(\ref{c1tc3}) : 
\begin{itemize}
	\item $Q_{\varepsilon}^{f}=]0,T[\times \Omega_{\varepsilon}^{f}$ and $Q_{\varepsilon}^{s}=]0,T[\times \Omega_{\varepsilon}^{s}$,
	\item  $\Omega_{\varepsilon}^{f}$ and $\Omega_{\varepsilon}^{s}$ are the  porous media (see Subsection \ref{c1sect11} for a total description) and $T$ is a fixed positive real,
	\item  $\Lambda$ is a support of a probability space and $\mathcal{T}$ is an $N$-dimensional dynamical system on $\Lambda$ with integer $N=2$ or $3$,
	\item  $0<\varepsilon\leq 1$ is a parameter controlling the microvariation in $Q_{\varepsilon}^{f}\times\Lambda$ (or $Q_{\varepsilon}^{s}\times\Lambda$),
	\item  $\textup{div}\equiv \textup{div}_{x}$ is the usual divergence in $\mathbb{R}_{x}^{N}$ of variables $x=(x_{1},\cdots,x_{N})$, 
	\item  (\ref{c1tc1}) is the Nernst-Planck equation : $\varpi_{\pm,\varepsilon}$ denotes the concentration of the cations (resp. anions) of valence $z_{\pm}$, $D_{\pm}$ their molecular diffusion coefficient, $\varUpsilon_{\varepsilon}$ denotes the electrical potential in the solution, $F$ and $R$ denote respectively the Faraday and the ideal gas constant, and $\varTheta$ the temperature of the fluid, $\varrho_{f}$ (resp. $\varrho_{s}$) denote the dielectric constant of the fluid phase $\Omega_{\varepsilon}^{f}$ (resp. of the solid phase $\Omega_{\varepsilon}^{s}$),
	\item  (\ref{c1tc2})-(\ref{c1tc3}) is the Poisson equation characterizing the electrical potential $\varUpsilon_{\varepsilon}$ and result from Maxwell's equations.
\end{itemize}
To these equations, we will associate the boundary Neumann and initial Cauchy conditions (see (\ref{c2tc6})-(\ref{c2tc9}) and (\ref{c2tc12}) ). In general, the Poisson-Nernst-Planck  system  is a well-established framework for describing the movement of ions and the variations of the electric field in an electrolyte (see e.g. \cite{kirby1}).
In biology, examples of such phenomena include ion transportation in biological tissues, such as ion transfer through ion channels of cell membrane \cite[Chapter 3]{kener1}  and application to neuronal signal propagation \cite{pod1} as well as understanding disease characteristics \cite{borys1}. In
geology, these equations on complex media appear when modeling electro-kinetic flow through porous rocks \cite{aliza1}
and, in engineering sciences, when modeling electro-osmosis in porous media \cite{wang1}.
 \par The homogenization of the Poisson-Nernst-Planck type equations have been studied by many authors. In \cite{gagneu1}, G. Gagneux and O. Millet studied the periodic homogenization of Poisson-Nernst-Planck equations of form (\ref{c1tc1})-(\ref{c1tc3}) where only the deterministic aspect is taken into account and in the same way, N. Ray, A. Muntean and P. Knabner, in \cite{ray1} study the periodic homogenization  of a Stokes-Nernst-Planck-Poisson system. In \cite{smuc1}, M. Schmuck and M.Z. Bazant studied the periodic homogenization of Poisson-Nernst-Planck equations via the asymptotic multiscale expansion method. Other works on the  numerical analysis and convergent discretizations for the nonstationary Nernst-Planck-Poisson system are detailed in \cite{proh1,proh2}. 
 \par  The idea in this paper is to start from the work of G. Gagneux and O. Millet \cite{gagneu1} by integrating the random variable, and to homogenize the new system that we obtain, using the stochastic two-scale convergence defined
by M. Sango et J.L. Woukeng in \cite{sango}. Note that this convergence generalizes both the concept of stochastic two-scale convergence in the mean of A. Bourgeat et al. \cite{bourgeat} and the concept of two-scale convergence of G. Nguetseng \cite{nguet1}. The choice of such convergence is motivated by the fact that in nature almost all phenomena behave randomly in some scales and deterministically in some other scales (see e.g., \cite{sango,sango2}). But, since the boundary Neumann conditions (\ref{c2tc6})-(\ref{c2tc9}) are defined in periodic surfaces, we have need to extends firstly  the concept of stochastic two-scale convergence to the periodic surfaces. 
\par The paper is divided into sections each revolving around a specific aspect. In section \ref{c1sect1} we start by describing the stochastic Poisson-Nernst-Planck (s-PNP) problem in porous periodic medium. Section \ref{sect2} dwells on some preliminary results on stochastic two-scale convergence method that we extend to periodic surfaces. To end, in the Section \ref{sect3}, we use this method to establishe the global homogenized problem resulting from the stochastic Poisson-Nernst-Planck equations.

	\section{Description of s-PNP problem in porous medium}\label{c1sect1}
	
	In this section, we start by describing the geometry of porous domain, and then the stochastic Poisson-Nernst-Planck problem in porous media, which for the readers’ convenience
	is not very different from the one adopted in \cite{gagneu1}. But we will assume that the data and the functions depend also of a stochastic parameter. To end, we show the existence and uniqueness of solutions of s-PNP microscopic problem.
	
	\subsection{Geometry of domain}\label{c1sect11}
	In order to describe the geometry of the porous domain, we recall some notations that we will use in the sequel.
	\begin{itemize}
		\item $\Omega$ is a bounded connected open set of $\mathbb{R}^{N}$ (integer $N>1$) with a Lipschitz boundary $\partial\Omega$.
		\item $V=H^{1}(\Omega)$ and $H=L^{1}(\Omega)$ are Hilbert spaces with the scalar products, the embedding of $V$ in $H$ being dense and compact.
		\item  we identify the pivot space $H$ with its topologic dual, such as $V'$, the dual of $V$, can be identified with an \textit{over-space} of $H$.
		\item We note $\langle\cdot,\cdot\rangle$ the duality bracket between $V$ and $V'$. For a couple of regular functions, this duality bracket coincides with the usual scalar product of $L^{2}(\Omega)$.
		\item 	For parabolic time depending problems, its is also classical to introduce the space (see e.g., \cite{jlion1,jlion2}) 
		\begin{equation*}
			\left\{u\in L^{2}\left([0,T]; H^{1}(\Omega)\right), \; \dfrac{\partial u}{\partial t} \in L^{2}\left([0,T]; L^{2}(\Omega)\right)\right\}
		\end{equation*}
		which is a Hilbert space for its natural topology that can be identified with $H^{1}(Q)$, with $Q=]0,T[\times\Omega$.  Moreover, the injection of $H^{1}(Q)$ in $L^{2}\left([0,T]; H\right)$ (identified with $L^{2}(Q)$) is compact \cite[Page 57]{jlion2}, and the injection of $H^{1}(Q)$ in $\mathcal{C}^{0}\left([0,T]; H\right)$ is continuous provided an appropriate choice of a representative element.
 	 \item  We denote by $Y$ the unit reference cube $[0,1]^{N}$, $Y_{s}$ an open set of $Y$ representing the solid phase, and $Y_{f}=Y\backslash \overline{Y_{s}}$ its complementary (assumed to be connected) representing the fluid phase.
 	 \item  The Lipschitzian boundary $\Gamma= \Gamma^{sf}\cup\Gamma^{ss}$ is composed of the internal boundary $\Gamma^{sf}$ between the fluid and the solid phases and of the boundary $\Gamma^{ss}$ between the solid parts of two adjacent unit cubes constituting the microstructure of the material. Equivalently, we denote by $\Gamma^{ff}$ the boundary between the fluid phases of two adjacent unit cubes.
 	 \item  Finally, in the sequel, we will still denote by $Y$, $Y_{s}$, $Y_{f}$ and $\Gamma^{sf}$ the extensions of $Y$, $Y_{s}$, $Y_{f}$ and $\Gamma^{sf}$ to $\mathbb{R}^{N}$ by $Y$-periodicity. 
 \end{itemize}
\par 	Introducing now a sequence $\varepsilon$ of positive reals\footnote{it is called a \textit{fundamental sequence} (see e.g. \cite{sango}) } converging to 0, we define the whole fluid phase where the fluid can circulate 
	
	\begin{equation}
		\Omega_{\varepsilon}^{f} = \left\{x\in\Omega, \; \dfrac{x}{\varepsilon} \in Y_{f}\right\}
	\end{equation} 
	and its complementary representing the whole solid phase 
	
	\begin{equation}
		\Omega_{\varepsilon}^{s} = \left\{x\in\Omega, \; \dfrac{x}{\varepsilon} \in Y_{s}\right\}
	\end{equation} 
	of the microstructure of the material. We denote by $\Gamma_{\varepsilon}^{sf}$ the inner boundary of $\Omega$ between the fluid and the solid phases, periodic surface of dimension $(N-1)$ such as :
	
		\begin{equation}
		\Gamma_{\varepsilon}^{sf} = \left\{x\in\Omega, \; \dfrac{x}{\varepsilon} \in \Gamma^{sf}\right\}.
	\end{equation} 
	See Figure \ref{domain1} below for a total schematization of porous periodic medium.
	
	    	\begin{figure}[H]
	    	\centering
	    	\includegraphics[width=0.9\linewidth]{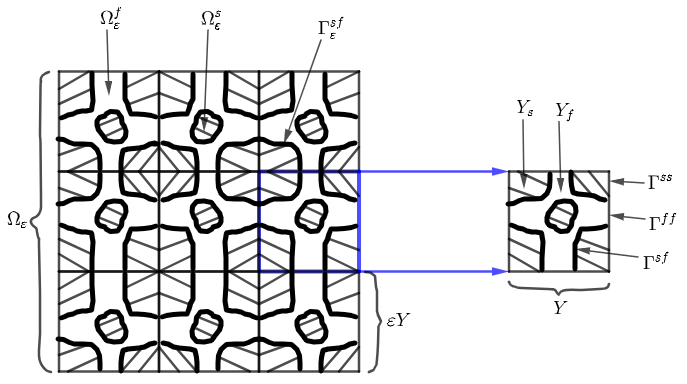}
	    	\caption{A 2D representation of  porous periodic medium $\Omega_{\varepsilon}$ and its unit cell $Y$}
	    	\label{domain1}
	    \end{figure}
	    
	    In the sequel we will denote $Q_{\varepsilon}^{f} = ]0,T[\times \Omega_{\varepsilon}^{f}$ and $V_{\varepsilon} = H^{1}(\Omega_{\varepsilon}^{f})$, $H^{1}(Q_{\varepsilon}^{f})$, $L^{\infty}(Q_{\varepsilon}^{f})$, etc., the expressions of the functional spaces which now depend on $\varepsilon$.
	
	\subsection{Microscopic s-PNP problem}
	
	The Poisson-Nernst-Planck (PNP) system of equations is a well-established framework for describing the movement of ions and the variations of the electric field in an electrolyte (see e.g. \cite{kirby1}). In order to describe the stochastic Poisson-Nernst-Planck equations, we add a random variable to unknown functions. 
	Thus, considering the porous medium as defined above (see Figure \ref{domain1}), the diffusion of ions in the fluid phase is described by Nernst-Planck equation coupled with the Poisson equation characterizing the electrical field accelerating or slowing down the ionic transfer. Nernst-Planck equation is generally written for an isotropic diffusivity assumed in the fluid phase (see, e.g. \cite{bard1,bil1,mil1,lipt1}) 
	
	   \begin{equation}\label{c2tc1}
	   	 \dfrac{\partial \varpi_{\pm,\varepsilon}}{\partial t} - D_{\pm} \textup{div} \left(\nabla\varpi_{\pm,\varepsilon} + \dfrac{F}{R\varTheta}z_{\pm}\varpi_{\pm}\nabla\varUpsilon_{\varepsilon}\right) = 0 \quad \textup{in} \; Q_{\varepsilon}^{f}\times\Lambda
	   \end{equation}
	   where $\varpi_{\pm,\varepsilon}$ denotes the concentration of the cations (resp. anions) of valence $z_{\pm}$, $D_{\pm}$ their molecular diffusion coefficient, $\varUpsilon_{\varepsilon}$ denotes the electrical potential in the solution, $F$ and $R$ denote respectively the Faraday and the ideal gas constant, and $\varTheta$ the temperature of the fluid. 
	   Note that equation (\ref{c2tc1}) corresponds to the mass conservation for the ionic species of concentration $\varpi_{\pm,\varepsilon}$ diffusing in the liquid phase.  
	   \par  The Poisson equation characterizing the electrical potential $\varUpsilon_{\varepsilon}$ in the fluid phase and in the solid phase is stated as 
	   
	   \begin{equation}\label{c2tc2}
	   	\textup{div}\left(\varrho_{f}\left(\mathcal{T}\left(\frac{x}{\varepsilon}\right)\omega,\frac{x}{\varepsilon^{2}}\right)\nabla\varUpsilon_{\varepsilon}\right) = -F \left(z_{+}\varpi_{+,\varepsilon} - z_{-}\varpi_{-,\varepsilon}\right) \quad \textup{in} \; Q_{\varepsilon}^{f}\times\Lambda
	   \end{equation}
	   where the quantity $\rho^{\star}= F \left(z_{+}\varpi_{+,\varepsilon} - z_{-}\varpi_{-,\varepsilon}\right)$ denotes the total electrical charge density in the fluid phase. \\
	   In the solid phase, the Poisson equation reduces to
	   
	   \begin{equation}\label{c2tc3}
	   	\textup{div}\left(\varrho_{s}\left(\mathcal{T}\left(\frac{x}{\varepsilon}\right)\omega,\frac{x}{\varepsilon^{2}}\right)\nabla\varUpsilon_{\varepsilon}\right) = 0 \quad \textup{in} \; Q_{\varepsilon}^{s}\times\Lambda
	   \end{equation}
	   as the concentrations of ionic species are only defined in the fluid phase. Note that $\varrho_{f}$ (resp. $\varrho_{s}$) denotes the dielectric constant of the fluid phase $\Omega_{\varepsilon}^{f}$ (resp. of the solid phase $\Omega_{\varepsilon}^{s}$) and also depends on the random variable. \\
	   Introducing the function 
	   \begin{equation}\label{c2tc4}
	   	\vartheta := \varrho_{f}\chi_{Y_{f}} + \varrho_{s}\chi_{Y_{s}} 
	   \end{equation}
	   representing the dielectric values on the whole domain $\Omega\times \Lambda$ through the characteristic phase functions 
	$\chi_{Y_{f}}$ and $\chi_{Y_{s}}$, (\ref{c2tc2}) and (\ref{c2tc3}) can be written in the global formulation in the sense of distributions as 
	
	 \begin{equation}\label{c2tc5}
	 	\begin{array}{rl}
		\textup{div}\left(\vartheta\left(\mathcal{T}\left(\frac{x}{\varepsilon}\right)\omega,\frac{x}{\varepsilon^{2}}\right)\nabla\varUpsilon_{\varepsilon}\right) = & -F \left(z_{+}\varpi_{+,\varepsilon} - z_{-}\varpi_{-,\varepsilon}\right)\chi_{\Omega_{\varepsilon}^{f}}  \\
		 & + g(\varepsilon,t,x,\omega)\delta_{(\Gamma_{\varepsilon}^{sf})} \qquad \quad \quad \;\; \textup{in} \; Q\times\Lambda 
	\end{array}
	\end{equation}
	where the last term denotes a measure, with support on $\Gamma_{\varepsilon}^{sf}$, governing the jump of the electrical flux through the interface $\Gamma_{\varepsilon}^{sf}$. This term will be explained later by the assertions (\ref{c2tc8})-(\ref{c2tc9}) and the Grahame relation.
	
	To (\ref{c2tc1})-(\ref{c2tc5}), let us write the associated Neumann boundary conditions  with the Nernst-Planck equation and the system is assumed to be insulated with respect to the exterior. Indeed, for all $t>0$ and almost all $\omega\in\Lambda$, we set 
	
	\begin{equation}\label{c2tc6}
		\begin{array}{r}
		  D_{\pm} \left(\nabla\varpi_{\pm,\varepsilon}(t,\cdot,\omega) + \dfrac{F}{R\varTheta}z_{\pm}\varpi_{\pm}(t,\cdot,\omega)\nabla\varUpsilon_{\varepsilon}(t,\cdot,\omega)\right)\cdot \vec{n} = 0, \\
		   \textup{on} \; \Gamma_{\varepsilon}^{sf}\cup \Gamma_{\varepsilon}^{f_{\textup{ext}}} 
		\end{array}
	\end{equation}
	
	\begin{equation}\label{c2tc7}
		\dfrac{\partial \varUpsilon_{\varepsilon}(t,\cdot,\omega)}{\partial \vec{n}} = 0,
		 \quad \textup{on} \; \Gamma_{\varepsilon}^{f_{\textup{ext}}}\cup \Gamma_{\varepsilon}^{s_{\textup{ext}}} 
	\end{equation}
	where $\Gamma_{\varepsilon}^{f_{\textup{ext}}} = \partial\Omega\cap\Omega_{\varepsilon}^{f}$ (resp. $\Gamma_{\varepsilon}^{s_{\textup{ext}}} = \partial\Omega\cap\Omega_{\varepsilon}^{s}$) denotes the external fluid boundary to $\Omega_{\varepsilon}^{f}$ (resp. to the external solid boundary to $\Omega_{\varepsilon}^{s}$) and $\vec{n}$ denotes the external unit normal to $\Omega_{\varepsilon}^{f}$. \\
	Moreover, the electrical double layer phenomenon at the interface $\Gamma_{\varepsilon}^{sf}$ will be modelled through the boundary condition involving the surface charge density $\kappa$. In the general case, accounting for the jump of the electrical flux through the interface $\Gamma_{\varepsilon}^{sf}$, noted $\left[\left[\vartheta\left(\mathcal{T}\left(\frac{x}{\varepsilon}\right)\omega,\frac{x}{\varepsilon^{2}}\right)\nabla\varUpsilon_{\varepsilon}(t,x,\omega)\right]\right]\cdot \vec{n}$, we have 
	
	\begin{equation}\label{c2tc8}
		\left[\left[\vartheta\left(\mathcal{T}\left(\frac{x}{\varepsilon}\right)\omega,\frac{x}{\varepsilon^{2}}\right)\nabla\varUpsilon_{\varepsilon}(t,\cdot,\omega)\right]\right]\cdot \vec{n} = \varepsilon \kappa \left(\varUpsilon_{\varepsilon}(t,\cdot,\omega)\right) \quad \textup{on} \; \Gamma_{\varepsilon}^{sf}
	\end{equation}
	where $\kappa$ is defined for all $t>0$ and almost all $\omega\in\Lambda$ by the Grahame relation (see \cite{gra1,gra2})
	
	\begin{equation}\label{c2tc9}
		 \kappa \left(\varUpsilon_{\varepsilon}(t,\cdot,\omega)\right) := -\eta\left(\mathcal{T}\left(\frac{x}{\varepsilon}\right)\omega,\frac{x}{\varepsilon^{2}}\right) \gamma\left(\varUpsilon_{\varepsilon}(t,\cdot,\omega)\right) \quad \textup{on} \; \Gamma_{\varepsilon}^{sf}.
	\end{equation}
	Here for almost all $\omega\in\Lambda$, $\eta(\omega,\cdot)$  denotes the dielectric permittivity coefficient at the interface $\Gamma^{sf}$, possibly heterogeneous, extended by $Y$-periodicity, verifying $\eta \in L^{\infty}(\Lambda\times\Gamma^{sf})$, $\eta(\omega,\cdot) \geq \eta_{0}(\omega,\cdot)>0$ for almost all $\omega\in\Lambda$. The function $\gamma$ must be a monotonic continuously differentiable Lipschitz-function satisfying the following conditions 
	
	\begin{equation}
		\gamma(0) = 0, \qquad \gamma^{\prime}(r) \geq \alpha, \quad \textup{with} \;\; \alpha>0, \;\, r\in\mathbb{R}.
	\end{equation}
	In the sequel we assume that the function $\vartheta$ belongs to $\mathcal{C}^{\infty}(\Lambda;\mathcal{C}_{per}(Y))$.

	\subsection{Existence and uniqueness of a weak solution for (\ref{c2tc1})-(\ref{c2tc8})}
	
	The aim of this section is to define the weak formulation of s-PNP system and to show the existence and uniqueness of its solution. For that, we argue as in \cite[Page 74]{gagneu1}.
	\par Multiplying the system of equations (\ref{c2tc1})-(\ref{c2tc8}) with the test functions $ \phi\in H^{1}(\Omega_{\varepsilon}^{f})$, $ \psi\in H^{1}(\Omega)$ and integrating by parts we get, $\forall\, t>0$ and for $\mu$-a.e. $\omega\in\Lambda$, the following weak formulation of problem to which we add the initial Cauchy conditions (\ref{c2tc12}) :
	
	\begin{equation}\label{c2tc10}
		\begin{array}{l}
		\int_{\Omega_{\varepsilon}^{f}} \dfrac{\partial \varpi_{\pm,\varepsilon}(t,\cdot,\omega)}{\partial t} \phi dx + D_{\pm} \int_{\Omega_{\varepsilon}^{f}} \nabla\varpi_{\pm,\varepsilon}(t,\cdot,\omega).\nabla\phi dx \\
		
		\\
		\quad + D_{\pm}\dfrac{F}{R\varTheta} \int_{\Omega_{\varepsilon}^{f}} z_{\pm}\varpi_{\pm,\varepsilon}(t,\cdot,\omega)\nabla\varUpsilon_{\varepsilon}(t,\cdot,\omega).\nabla\phi dx = 0,
	\end{array}
	\end{equation}
	\vspace{0.5cm}
	\begin{equation}\label{c2tc11}
		\begin{array}{l}
			\int_{\Omega} \vartheta\left(\mathcal{T}\left(\frac{\cdot}{\varepsilon}\right)\omega,\frac{\cdot}{\varepsilon^{2}}\right)\nabla\varUpsilon_{\varepsilon}(t,\cdot,\omega).\nabla\psi dx  \\
			
			\\
			\quad	+ \varepsilon \int_{\Gamma^{sf}_{\varepsilon}} \eta\left(\mathcal{T}\left(\frac{\cdot}{\varepsilon}\right)\omega,\frac{\cdot}{\varepsilon^{2}}\right) \gamma\left(\varUpsilon_{\varepsilon}(t,\cdot,\omega)\right) \psi dS_{\varepsilon}  \\
			
		 \\
		\quad	=
			 F \int_{\Omega_{\varepsilon}^{f}} \left(z_{+}\varpi_{+,\varepsilon}(t,\cdot,\omega) - z_{-}\varpi_{-,\varepsilon}(t,\cdot,\omega)\right)\psi dx, 
		\end{array}
	\end{equation}
	\vspace{0.5cm}
	\begin{equation}\label{c2tc12}
	\varpi_{\pm,\varepsilon}(0,\cdot,\omega) = \varpi^{0}_{\pm,\varepsilon}(\cdot,\omega) \quad \textup{in} \;\, \Omega_{\varepsilon}.	
	\end{equation}
	\begin{defn}
		We call $(\varpi_{\pm,\varepsilon})= (\varpi_{+,\varepsilon}, \varpi_{-,\varepsilon})$ and $\varUpsilon_{\varepsilon}$ the weak solutions of s-PNP equations (\ref{c2tc1})-(\ref{c2tc8}) if
		 $$(\varpi_{\pm,\varepsilon})= (\varpi_{+,\varepsilon}, \varpi_{-,\varepsilon}) \in L^{\infty}\left([0,T]; L^{2}\left(\Lambda;H^{1}(\Omega_{\varepsilon}^{f})\right)\right)^{2} \cap L^{2}\left(\Lambda;H^{1}(Q_{\varepsilon}^{f})\right)^{2}$$
		  and 
		  $$\varUpsilon_{\varepsilon} \in L^{\infty}\left([0,T]; L^{2}\left(\Lambda;H^{1}(\Omega)\right)\right)$$
		   and equations (\ref{c2tc10})-(\ref{c2tc12}) are satisfied for all test functions $ \phi\in H^{1}(\Omega_{\varepsilon}^{f})$, $ \psi\in H^{1}(\Omega)$. 
	\end{defn}
	\begin{prop}\label{c2prop1}
		Assume that $\Omega$ is smooth enough and initial conditions $(\varpi^{0}_{\pm,\varepsilon}) = \left(\varpi^{0}_{+,\varepsilon}, \varpi^{0}_{-,\varepsilon}\right) \in L^{2}\left(\Lambda;L^{\infty}(\Omega_{\varepsilon}^{f})\right)^{2} \cap L^{2}\left(\Lambda;H^{1}(\Omega_{\varepsilon}^{f})\right)^{2}$ with $(\varpi^{0}_{\pm,\varepsilon}(\cdot,\omega)) \geq 0$ for almost all $\omega\in\Lambda$. Then for $\mu$-a.e. $\omega\in\Lambda$, the variational problem (\ref{c2tc10})-(\ref{c2tc12}) admits a unique solution $(\varpi_{\pm,\varepsilon}(\cdot,\cdot,\omega), \varUpsilon_{\varepsilon}(\cdot,\cdot,\omega))$ such that $(\varpi_{\pm,\varepsilon}(\cdot,\cdot,\omega)) = \left(\varpi_{+,\varepsilon}(\cdot,\cdot,\omega), \varpi_{-,\varepsilon}(\cdot,\cdot,\omega)\right) \in L^{\infty}\left([0,T]; H^{1}(\Omega_{\varepsilon}^{f})\right)^{2} \cap H^{1}(Q_{\varepsilon}^{f})^{2}$ and $\varUpsilon_{\varepsilon}(\cdot,\cdot,\omega) \in \mathcal{C}^{0}\left([0,T]; H^{1}(\Omega)\right)$.
	\end{prop}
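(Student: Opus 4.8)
The decisive observation is that the statement is formulated for $\mu$-almost every \emph{fixed} $\omega\in\Lambda$, so the plan is to freeze $\omega$ once and for all and treat the system as purely deterministic. With $\omega$ frozen, the coefficients $\vartheta\left(\mathcal{T}\left(\frac{\cdot}{\varepsilon}\right)\omega,\frac{\cdot}{\varepsilon^{2}}\right)$ and $\eta\left(\mathcal{T}\left(\frac{\cdot}{\varepsilon}\right)\omega,\frac{\cdot}{\varepsilon^{2}}\right)$ become fixed functions on $\Omega$ which, thanks to $\vartheta\in\mathcal{C}^{\infty}(\Lambda;\mathcal{C}_{per}(Y))$ with $\varrho_{f},\varrho_{s}>0$ and $\eta(\omega,\cdot)\geq\eta_{0}(\omega,\cdot)>0$, are bounded and bounded below by positive constants. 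Problem (\ref{c2tc10})--(\ref{c2tc12}) is then exactly the deterministic coupled parabolic--elliptic Poisson--Nernst--Planck system, and I would argue as in \cite[Page 74]{gagneu1}. The scheme is to decouple the system by a fixed-point map and to close it with a priori estimates.

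The first step is to solve, for a prescribed pair of concentrations $\bar\varpi_{\pm}$ taken in a closed bounded convex subset of $L^{2}(Q_{\varepsilon}^{f})^{2}$, the Poisson equation (\ref{c2tc11}) for $\varUpsilon_{\varepsilon}(t,\cdot)$. The bilinear form $\psi\mapsto\int_{\Omega}\vartheta\,\nabla\varUpsilon_{\varepsilon}\cdot\nabla\psi\,dx$ is coercive on $H^{1}(\Omega)$ (modulo the usual handling of additive constants dictated by the Neumann conditions (\ref{c2tc7})), while the boundary contribution $\psi\mapsto\varepsilon\int_{\Gamma_{\varepsilon}^{sf}}\eta\,\gamma(\varUpsilon_{\varepsilon})\psi\,dS_{\varepsilon}$ coming from (\ref{c2tc9}) is monotone and Lipschitz because $\gamma(0)=0$, $\gamma'\geq\alpha>0$, and $\gamma$ is Lipschitz. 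Hence the elliptic problem falls under the theory of monotone operators, and existence and uniqueness of $\varUpsilon_{\varepsilon}(t,\cdot)\in H^{1}(\Omega)$ follow from the Browder--Minty theorem, together with the trace/Poincar\'e estimate $\|\varUpsilon_{\varepsilon}(t,\cdot)\|_{H^{1}(\Omega)}\leq C\left(\|\bar\varpi_{+}\|+\|\bar\varpi_{-}\|+1\right)$. With this $\varUpsilon_{\varepsilon}$ frozen, the second step is to solve the two \emph{linear} Nernst--Planck parabolic equations (\ref{c2tc10}) for $\varpi_{\pm}$, the term $z_{\pm}\varpi_{\pm}\nabla\varUpsilon_{\varepsilon}$ being read as a drift with coefficient $\nabla\varUpsilon_{\varepsilon}$; this I would handle by the Faedo--Galerkin method and Lions' theorem, obtaining $\varpi_{\pm}\in L^{2}([0,T];H^{1}(\Omega_{\varepsilon}^{f}))\cap H^{1}(Q_{\varepsilon}^{f})$. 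The resulting map $\Phi:(\bar\varpi_{\pm})\mapsto(\varpi_{\pm})$ is continuous and, by the compact injection $H^{1}(Q_{\varepsilon}^{f})\hookrightarrow L^{2}([0,T];H)$ recalled in Subsection \ref{c1sect11}, compact from that convex set into itself, so Schauder's fixed-point theorem yields a weak solution.

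To make the fixed-point domain invariant and to identify the solution, I would establish the following a priori estimates: (i) nonnegativity $\varpi_{\pm}\geq 0$, by testing (\ref{c2tc10}) with the negative part $\varpi_{\pm}^{-}$ and using $\varpi_{\pm}^{0}\geq 0$; (ii) an $L^{\infty}([0,T];H)\cap L^{2}([0,T];H^{1})$ energy bound, by testing with $\varpi_{\pm}$ itself and absorbing the convective term through the $H^{1}$-bound on $\varUpsilon_{\varepsilon}$; (iii) an $L^{\infty}(Q_{\varepsilon}^{f})$ bound on $\varpi_{\pm}$ via Stampacchia truncation, which both keeps $\Phi$ invariant and controls the nonlinear coupling; (iv) an $L^{2}$-bound on $\partial_{t}\varpi_{\pm}$. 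Uniqueness I would obtain by subtracting two solutions, testing the difference of (\ref{c2tc10}) with the difference of concentrations and the difference of (\ref{c2tc11}) with the difference of potentials, and closing via Gronwall's inequality, the Lipschitz character of $\gamma$ and the $L^{\infty}$ bounds making the nonlinear terms tractable. The stated regularity then follows: $\varUpsilon_{\varepsilon}(\cdot,\cdot,\omega)\in\mathcal{C}^{0}([0,T];H^{1}(\Omega))$ from continuous dependence of the monotone elliptic solution on the time-continuous data $\varpi_{\pm}$, and $\varpi_{\pm}\in L^{\infty}([0,T];H^{1}(\Omega_{\varepsilon}^{f}))\cap H^{1}(Q_{\varepsilon}^{f})$ from the energy estimates together with the continuous injection $H^{1}(Q_{\varepsilon}^{f})\hookrightarrow\mathcal{C}^{0}([0,T];H)$.

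The main obstacle is the nonlinear coupling through the drift term $\varpi_{\pm}\nabla\varUpsilon_{\varepsilon}$, which is not controlled by the energy norms alone. Establishing and propagating the $L^{\infty}$ bound on the concentrations in step (iii), needed simultaneously to keep the fixed-point domain invariant, to bound the convective term, and to run the Gronwall argument for uniqueness, is the crux of the proof, and it is precisely here that the smoothness assumption on $\Omega$ and the hypothesis $\varpi_{\pm}^{0}\in L^{\infty}(\Omega_{\varepsilon}^{f})\cap H^{1}(\Omega_{\varepsilon}^{f})$ enter.
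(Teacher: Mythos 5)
Your proposal is correct and follows essentially the same route as the paper: the paper's proof of Proposition \ref{c2prop1} consists precisely in fixing $\omega\in\Lambda$ and invoking the deterministic existence--uniqueness result of Gagneux--Millet \cite[Proposition 1]{gagneu1}, with the standard arguments of \cite{sango2,bourgeat} for handling the stochastic parameter. You have simply unpacked the deterministic fixed-point/monotone-operator argument (Schauder decoupling, Browder--Minty for the Poisson part, Stampacchia $L^{\infty}$ bounds, Gronwall uniqueness) that the paper cites rather than reproduces, which is consistent with it.
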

	\begin{proof}
		The proof derived from \cite[Proposition 1]{gagneu1}  using the same arguments as in \cite[Page 42]{sango2} or \cite[Page 48]{bourgeat} 
	\end{proof}
	\begin{rem}
	As in \cite[Page 42]{sango2}, we deduce from the Proposition \ref{c2prop1},  that the unique solution $(\varpi_{\pm,\varepsilon}, \varUpsilon_{\varepsilon})$ of weak problem (\ref{c2tc10})-(\ref{c2tc12}) is such that $(\varpi_{\pm,\varepsilon}) = \left(\varpi_{+,\varepsilon}, \varpi_{-,\varepsilon}\right) \in L^{\infty}\left([0,T];L^{2}(\Lambda; H^{1}(\Omega_{\varepsilon}^{f}))\right)^{2} \cap L^{2}\left(\Lambda;H^{1}(Q_{\varepsilon}^{f})\right)^{2}$ and $\varUpsilon_{\varepsilon} \in \mathcal{C}^{0}\left([0,T]; L^{2}(\Lambda; H^{1}(\Omega))\right)$.
	\end{rem}
	
	We give now some physical properties about the concentrations $\varpi_{\pm,\varepsilon}$ and the electrical potential $\varUpsilon_{\varepsilon}$. This is a consequence of Proposition \ref{c2prop1}. Thus for all $t>0$ and for $\mu$-a.e. $\omega\in\Lambda$ we have : 
	\begin{equation}\label{c2tc13}
		(\varpi_{\pm,\varepsilon}(\cdot,\cdot,\omega)) \in L^{\infty}(Q_{\varepsilon}^{f})^{2}, \quad \varpi_{\pm,\varepsilon}(t,\cdot,\omega) \geq 0 \quad \textup{in} \;\, \Omega_{\varepsilon}^{f},
	\end{equation}
	
	\begin{equation}\label{c2tc14}
		\int_{\Omega_{\varepsilon}^{f}} \varpi_{\pm,\varepsilon}(t,\cdot,\omega) dx = \int_{\Omega_{\varepsilon}^{f}} \varpi^{0}_{\pm,\varepsilon}(\cdot,\omega) dx,
	\end{equation}
	as consequence of (\ref{c2tc10}) with the test function $\phi = 1_{\Omega_{\varepsilon}^{f}}$,
	\begin{equation}\label{c2tc15}
		\begin{array}{l}
			F \int_{\Omega_{\varepsilon}^{f}} \left(z_{+}\varpi_{+,\varepsilon}(t,\cdot,\omega) - z_{-}\varpi_{-,\varepsilon}(t,\cdot,\omega)\right) dx \\
	   	= \varepsilon \int_{\Gamma^{sf}_{\varepsilon}} \eta\left(\mathcal{T}\left(\frac{\cdot}{\varepsilon}\right)\omega,\frac{\cdot}{\varepsilon^{2}}\right) \gamma\left(\varUpsilon_{\varepsilon}(t,\cdot,\omega)\right) dS_{\varepsilon},  
	\end{array}
	\end{equation}
	as consequence of (\ref{c2tc11}) with the test function $\psi = 1_{\Omega}$. Therefore, (\ref{c2tc14}) implies that the quantity
	\begin{equation*}
	\varepsilon \int_{\Gamma^{sf}_{\varepsilon}} \vartheta\left(\varUpsilon_{\varepsilon}(t,\cdot,\omega)\right) dS_{\varepsilon} = -\varepsilon \int_{\Gamma^{sf}_{\varepsilon}} \eta\left(\mathcal{T}\left(\frac{\cdot}{\varepsilon}\right)\omega,\frac{\cdot}{\varepsilon^{2}}\right) \gamma\left(\varUpsilon_{\varepsilon}(t,\cdot,\omega)\right) dS_{\varepsilon}	
	\end{equation*}
	is constant with respect to time $t$ and we have for all $t>0$ and for $\mu$-a.e. $\omega\in\Lambda$ : 
		\begin{equation}\label{c2tc16}
		\varepsilon \int_{\Gamma^{sf}_{\varepsilon}} \vartheta\left(\varUpsilon_{\varepsilon}(t,\cdot,\omega)\right) dS_{\varepsilon} = 	-Fz_{+} \int_{\Omega_{\varepsilon}^{f}} \varpi^{0}_{+,\varepsilon}(\cdot,\omega) dx  + Fz_{-} \int_{\Omega_{\varepsilon}^{f}} \varpi^{0}_{-,\varepsilon}(\cdot,\omega) dx
	\end{equation}

	
	\section{Fundamentals of stochastic two-scale convergence}\label{sect2}
	
	\subsection{Some notations}
	Throughout this section, all the vector spaces are assumed to be real vector spaces, and the scalar functions are assumed real values.
	We refer to  \cite{bourgeat}, \cite{sango}, \cite{sango2} for notations. 
	\begin{itemize}
		\item $\Omega$ is an open bounded set of $\mathbb{R}^{N}$, integer $N>1$ and
	 $|\Omega|$ denotes the volume of $\Omega$ with respect to the Lebesgue mesure.
\item	$Y=[0,1]^{N}$ denote the unit cube of $\mathbb{R}^{N}$.
\item  $Q= ]0,T[\times \Omega$ where $T>0$ be a real number.
\item	$E=(\varepsilon_{n})_{n\in\mathbb{N}}$ is a \textit{fundamental sequence}. 
\item	$\mathcal{D}(\Omega)=\mathcal{C}^{\infty}_{0}(\Omega)$ is the vector space of smooth functions with compact support in $\Omega$. 
\item	$\mathcal{C}^{\infty}(\Omega)$ is the vector space of smooth functions on $\Omega$. 
\item	$\mathcal{K}(\Omega) $ is the vector space of continuous functions with compact support in $\Omega$. 
\item	$L^{p}(\Omega)$, integer $p \in [1,+\infty]$, is Lebesgue space of functions on $\Omega$.  
\item	$W^{1,p}(\Omega) = \{ v \in L^{p}(\Omega) \, : \, \frac{\partial v}{\partial x_{i}} \in L^{p}(\Omega), \; 1 \leq i \leq N \}$, where derivatives are taken in the weak sense, is classical Sobolev's space of functions and $W^{1,2}(\Omega) \equiv H^{1}(\Omega)$. 
\item	$W^{1,p}_{0}(\Omega)$ the set of functions in $W^{1,p}(\Omega)$ with zero boundary condition and $W^{1,2}_{0}(\Omega) \equiv H^{1}_{0}(\Omega)$.
\item	$\nabla$ or $D$ (resp. $\nabla_{y}$ or $D_{y}$) denote the (classical) gradient operator on $\Omega$ (resp. on $Y$). 
\item	$\textup{div}$ (resp. $\textup{div}_{y}$) the (classical) divergence operator on $\Omega$ (resp. on $Y$).
\item	If $F(\mathbb{R}^{N})$ is a given function space, we denote by $F_{per}(Y)$ the space of functions in $F_{loc}(\mathbb{R}^{N})$ that are $Y$-periodic, and by $F_{\#}(Y)$ those functions in $F_{per}(Y)$ with mean value zero. 
\item	$(\Lambda, \mathscr{M}, \mu)$ : measure space with probability measure $\mu$. 
\item	$\{\mathcal{T}(y): \Lambda \rightarrow \Lambda \, , \, y \in \mathbb{R}^{N} \}$ is an $N$-dimensional dynamical system on $\Omega$  
\item	$L^{p}(\Lambda)$, integer $p \in [1,+\infty]$, is Lebesgue space of functions on $\Lambda$.  
\item	$L^{p}_{nv}(\Lambda)$ is the set of all $\mathcal{T}$-invariant functions in $L^{p}(\Lambda)$.
\item	$D_{i}^{\omega} : L^{p}(\Lambda)\rightarrow L^{p}(\Lambda)$, $1 \leq i \leq N$, $\omega\in\Lambda$ is the $i$-th stochastic derivative operator. 
\item	$D_{\omega} = (D_{1}^{\omega}, \cdots, D_{N}^{\omega})$ is the stochastic gradient operator on $\Lambda$.
\item	$W^{1,p}(\Lambda) = \left\{ f \in L^{p}(\Lambda) : D^{\omega}_{i}f \in L^{p}(\Lambda) \, (1 \leq i \leq N) \right\}$,
	where $D^{\omega}_{i}f$ is taken in the distributional sense on $\Lambda$ and $W^{1,2}(\Lambda) \equiv H^{1}(\Lambda)$.
\item	$W_{\#}^{1,p}(\Lambda)$ is the separated completion of $\mathcal{C}^{\infty}(\Lambda)$ in $L^{p}(\Lambda)$  with respect to the norm $\|f\|_{\#,p} = \left( \sum_{i=1}^{N} \|\overline{D}^{\omega}_{i}f\|^{p}_{L^{p}(\Lambda)} \right)^{\frac{1}{p}}$, where $\overline{D}_{i}^{\omega}$ is the extension of $D_{i}^{\omega}$ to $W_{\#}^{1,p}(\Lambda)$.
\item  $\overline{D}_{\omega}$ is the extension of $D_{\omega}$ to $W_{\#}^{1,p}(\Lambda)$.
\item	the operator $\textup{div}_{\omega} : L^{p'}(\Omega)^{N} \rightarrow (W_{\#}^{1,p}(\Omega))'$, $(p' = \frac{p}{p-1})$ defined by 
\begin{equation*}
	\langle \textup{div}_{\omega}u, v \rangle = - \langle u, \overline{D}_{\omega}v \rangle =  - \sum_{i=1}^{N} \int_{\Omega} u_{i} \overline{D}_{i}^{\omega}v \,d\mu
\end{equation*}
$(u = (u_{i}) \in L^{p'}(\Omega)^{N}, v \in W_{\#}^{1,p}(\Omega))$ naturally extends the stochastic divergence operator in $\mathcal{C}^{\infty}(\Omega)$.  
	\end{itemize}

\subsection{Stochastic two-scale convergence in $L^{p}(\Omega\times\Lambda)$}	
 Now we recall some concepts about the stochastic two-scale convergence which is the generalization of both two-scale convergence in the mean (of Bourgeat and al. \cite{bourgeat}) and two-scale convergence (of Nguetseng \cite{nguet1}). Let $F(\mathbb{R}^{N})$ be a given function space. We also assume that $L^{p}(\Lambda)$ is separable. We denote by $F_{per}(Y)$ the space of functions in $F_{loc}(\mathbb{R}^{N})$ that are $Y$-periodic, and by $F_{\#}(Y)$ those functions in $F_{per}(Y)$ with mean value zero. As special cases, $\mathcal{D}_{per}(Y)$ denotes the space $\mathcal{C}^{\infty}_{per}(Y)$ while $\mathcal{D}_{\#}(Y)$ stands for the space of those functions in $\mathcal{D}_{per}(Y)$ with mean value zero. $\mathcal{D}'_{per}(Y)$ stands for the topological dual of $\mathcal{D}_{per}(Y)$ which can be identified with the space of periodic distributions in $\mathcal{D}'(\mathbb{R}^{N})$. We say that an element $a \in L^{p}(\Omega\times\Lambda\times Y)$ is \textit{admissible} if the function $a_{\mathcal{T}} : (x, \omega, y) \rightarrow a(x, \mathcal{T}(x)\omega, y)$, $(x, \omega, y) \in \Omega\times\Lambda\times Y$, defines an element of $L^{p}(\Omega\times\Lambda\times Y)$. In this case, the trace function $(x,\omega)\mapsto a\left(x, \mathcal{T}\left(\frac{x}{\varepsilon}\right)\omega, \frac{x}{\varepsilon^{2}} \right)$ (denoted by $a^{\epsilon}$), from $\Omega\times\Lambda$ to $\mathbb{R}$ is well-defined as an element of $L^{p}(\Omega\times\Lambda)$ and satisfies the following convergence result :
	\begin{equation*}
		\int_{\Omega\times\Lambda} |a^{\varepsilon}| dxd\mu \rightarrow \iint_{\Omega\times\Lambda\times Y} |a| dxd\mu dy, \quad \textup{as}\; \varepsilon\to 0.
	\end{equation*}
	For example, any function in each of the following spaces is \textit{admissible} : 
	\begin{itemize}
		\item $\mathcal{C}^{\infty}_{0}(\Omega)\otimes\mathcal{C}^{\infty}(\Lambda)\otimes\mathcal{C}^{\infty}_{per}(Y) \subseteq L^{p}(\Omega)\otimes L^{p}(\Lambda) \otimes L^{p}_{per}(Y)$,
		\item $\mathcal{K}(\Omega; L^{p}(\Lambda;\mathcal{C}_{per}(Y)))$, $1\leq p \leq \infty$ the space of continuous functions of $\Omega$ into $L^{p}(\Lambda;\mathcal{C}_{per}(Y))$ with compact support contained in $\Omega$,
		\item $\mathcal{C}(\overline{\Omega}; L^{\infty}(\Lambda;\mathcal{C}_{per}(Y)))$, for any bounded domain $\Omega$ in $\mathbb{R}^{N}$. 
	\end{itemize}  
	Now we can define the concept of weak stochastic two-scale convergence.
	\begin{defn}
		Let $(u_{\varepsilon})_{\varepsilon>0}$ be a bounded sequence in $L^{p}(\Omega\times\Lambda)$, $1\leq p < \infty$. The sequence $(u_{\varepsilon})_{\varepsilon>0}$ is said to weakly stochastically two-scale converge in $L^{p}(\Omega\times\Lambda)$ to some $u_{0} \in L^{p}(\Omega\times\Lambda; L^{p}_{per}(Y))$ if as $\varepsilon\to 0$, we have 
		\begin{equation}
			\int_{\Omega\times\Lambda} u_{\varepsilon}(x,\omega)a\left(x, \mathcal{T}\left(\frac{x}{\varepsilon}\right)\omega, \frac{x}{\varepsilon^{2}} \right) dxd\mu \rightarrow \iint_{\Omega\times\Lambda\times Y} u_{0}(x,\omega,y) a(x,\omega,y) dx d\mu dy
		\end{equation}
		for every $a \in \mathcal{C}^{\infty}_{0}(\Omega)\otimes\mathcal{C}^{\infty}(\Lambda)\otimes\mathcal{C}^{\infty}_{per}(Y)$. \\
		We can express this by writing $u_{\varepsilon} \rightarrow u_{0}$ stoch. in $L^{p}(\Omega\times\Lambda)$-weak 2s. 
	\end{defn}
	
	We recall that $\mathcal{C}^{\infty}_{0}(\Omega)\otimes\mathcal{C}^{\infty}(\Lambda)\otimes\mathcal{C}^{\infty}_{per}(Y)$ is the space of functions of the form, 
	\begin{equation}
		a(x, \omega, y) = \sum_{finite} \varphi_{i}(x)\psi_{i}(\omega)g_{i}(y), \quad (x,\omega, y) \in \Omega\times\Lambda\times \mathbb{R}^{N} ,
	\end{equation}
	with $\varphi_{i} \in \mathcal{C}^{\infty}_{0}(\Omega)$, $\psi_{i} \in \mathcal{C}^{\infty}(\Lambda)$ and $g_{i} \in \mathcal{C}^{\infty}_{per}(Y)$. Such functions are dense in $\mathcal{C}^{\infty}_{0}(\Omega) \otimes L^{p'}(\Lambda)\otimes\mathcal{C}^{\infty}_{per}(Y)$ ($p'= \frac{p}{p-1}$) for $1< p < \infty$, since $\mathcal{C}^{\infty}(\Lambda)$ is dense in $L^{p'}(\Lambda)$ and hence $\mathcal{K}(\Omega; L^{p'}(\Lambda))\otimes\mathcal{C}^{\infty}_{per}(Y)$, where $\mathcal{K}(\Omega; L^{p'}(\Lambda))$ being the space of continuous functions of $\Omega$ into $L^{p'}(\Lambda)$ with compact support contained in $\Omega$, (see e.g., \cite[Chap III, Proposition 5]{barki2} for denseness result). As $\mathcal{K}(\Omega; L^{p'}(\Lambda))$ is dense in $L^{p'}(\Omega; L^{p'}(\Lambda)) = L^{p'}(\Omega\times\Lambda)$ and $L^{p'}(\Omega\times\Lambda)\otimes\mathcal{C}^{\infty}_{per}(Y)$ is dense in $L^{p'}(\Omega\times\Lambda ; \mathcal{C}_{per}(Y))$, the uniqueness of the stochastic two-scale limit is ensured. 
	\par Now, we give some properties about the stochastic two-scale convergence whose proof can be found in \cite{sango} or \cite{sango2}.
	\begin{prop}\cite{sango} \\
		Let $(u_{\varepsilon})_{\varepsilon>0}$ be a sequence in $L^{p}(\Omega\times\Lambda)$. If $u_{\varepsilon} \rightarrow u_{0}$ stoch. in $L^{p}(\Omega\times\Lambda)$-weak 2s, then $(u_{\varepsilon})_{\varepsilon>0}$ stochastically two-scale converges in the mean ( see \cite{bourgeat}) towards $v_{0}(x,\omega) = \int_{Y} u_{0}(x,\omega,y) dy$ and 
		\begin{equation}
			\int_{\Lambda} u_{\varepsilon}(\cdot, \omega)\psi(\omega)d\mu \rightarrow \iint_{\Lambda\times Y} u_{0}(\cdot, \omega, y)d\mu dy \; \textup{in} \, L^{1}(\Omega)-weak \; \forall \psi \in L^{p'}_{nv}(\Lambda).
		\end{equation}
	\end{prop}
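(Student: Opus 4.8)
The plan is to obtain both assertions directly from the defining relation of weak stochastic two-scale convergence by specializing the periodic component of the test function to the constant $g\equiv 1\in\mathcal{C}^{\infty}_{per}(Y)$, which suppresses the fine scale $\tfrac{x}{\varepsilon^{2}}$ and replaces the $y$-dependence by the cell average $v_{0}(x,\omega)=\int_{Y}u_{0}(x,\omega,y)\,dy$. For the first assertion, I would fix $\varphi\in\mathcal{D}(\Omega)$ and $\psi\in\mathcal{C}^{\infty}(\Lambda)$ and apply the hypothesis $u_{\varepsilon}\to u_{0}$ stoch.\ weak 2s to the admissible test function $a(x,\omega,y)=\varphi(x)\psi(\omega)$, whose trace is $a^{\varepsilon}(x,\omega)=\varphi(x)\psi(\mathcal{T}(\tfrac{x}{\varepsilon})\omega)$. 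This yields
\[
\int_{\Omega\times\Lambda} u_{\varepsilon}(x,\omega)\,\varphi(x)\,\psi\!\left(\mathcal{T}\!\left(\tfrac{x}{\varepsilon}\right)\omega\right) dx\,d\mu \;\longrightarrow\; \iint_{\Omega\times\Lambda\times Y} u_{0}\,\varphi\,\psi\,dx\,d\mu\,dy = \int_{\Omega\times\Lambda} v_{0}(x,\omega)\,\varphi(x)\,\psi(\omega)\,dx\,d\mu ,
\]
which is exactly the defining convergence of stochastic two-scale convergence in the mean (Bourgeat et al.) toward $v_{0}$, once it is extended from $\psi\in\mathcal{C}^{\infty}(\Lambda)$ to all admissible $\psi\in L^{p'}(\Lambda)$.

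For the second assertion the decisive observation is that $\psi\in L^{p'}_{nv}(\Lambda)$ is $\mathcal{T}$-invariant, so $\psi(\mathcal{T}(\tfrac{x}{\varepsilon})\omega)=\psi(\omega)$ for $\mu$-a.e.\ $\omega$ and every $x$. Hence, testing against $\varphi\in\mathcal{D}(\Omega)$,
\[
\int_{\Omega} \varphi(x)\left(\int_{\Lambda} u_{\varepsilon}(x,\omega)\psi(\omega)\,d\mu\right) dx = \int_{\Omega\times\Lambda} u_{\varepsilon}\,\varphi(x)\,\psi\!\left(\mathcal{T}\!\left(\tfrac{x}{\varepsilon}\right)\omega\right) dx\,d\mu ,
\]
which is again the $g\equiv 1$ test and therefore converges to $\int_{\Omega}\varphi(x)\big(\iint_{\Lambda\times Y}u_{0}\,\psi\,d\mu\,dy\big)\,dx$. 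To promote convergence against $\mathcal{D}(\Omega)$ to weak convergence in $L^{1}(\Omega)$, I would note that Hölder's inequality in $\omega$ together with the boundedness of $(u_{\varepsilon})$ in $L^{p}(\Omega\times\Lambda)$ gives $\big\|\int_{\Lambda}u_{\varepsilon}\psi\,d\mu\big\|_{L^{p}(\Omega)}\le\|\psi\|_{L^{p'}(\Lambda)}\|u_{\varepsilon}\|_{L^{p}(\Omega\times\Lambda)}$, a uniform bound; since $\mathcal{D}(\Omega)$ is dense in $L^{p'}(\Omega)$ and $\Omega$ is bounded, this identifies the weak-$L^{p}(\Omega)$ limit, which in turn is a weak-$L^{1}(\Omega)$ limit (as $L^{\infty}(\Omega)\subset L^{p'}(\Omega)$).

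The hard, or at least the only non-formal, step is the density argument upgrading the $\omega$-test function from $\mathcal{C}^{\infty}(\Lambda)$ to $L^{p'}(\Lambda)$, which must be carried out uniformly in $\varepsilon$; here the stationarity (measure-preserving property) of the dynamical system $\mathcal{T}$ is essential. Choosing $\psi_{n}\in\mathcal{C}^{\infty}(\Lambda)$ with $\psi_{n}\to\psi$ in $L^{p'}(\Lambda)$ and setting $h_{n}=\psi-\psi_{n}$, invariance of $\mu$ under $\mathcal{T}$ gives
\[
\int_{\Lambda}\left|h_{n}\!\left(\mathcal{T}\!\left(\tfrac{x}{\varepsilon}\right)\omega\right)\right|^{p'} d\mu = \|h_{n}\|_{L^{p'}(\Lambda)}^{p'} \quad\text{for each } x,
\]
so that $\|\,h_{n}(\mathcal{T}(\tfrac{\cdot}{\varepsilon})\omega)\,\|_{L^{p'}(\Omega\times\Lambda)}=|\Omega|^{1/p'}\|h_{n}\|_{L^{p'}(\Lambda)}\to 0$ uniformly in $\varepsilon$. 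Combined with Hölder and the $L^{p}$-bound on $(u_{\varepsilon})$, the error terms arising from replacing $\psi$ by $\psi_{n}$ (and, in the invariant case, from using $\psi(\mathcal{T}(\tfrac{\cdot}{\varepsilon})\omega)=\psi$) are controlled uniformly, so the convergence established for smooth test functions passes to the limit in $n$ and delivers the claim for every $\psi\in L^{p'}(\Lambda)$, respectively $\psi\in L^{p'}_{nv}(\Lambda)$. The remaining verifications --- that $v_{0}\in L^{p}(\Omega\times\Lambda)$ by Jensen/Hölder on the unit-measure cube $Y$, and the identification of the various limits --- are routine.
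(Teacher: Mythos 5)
Your proof is correct and follows essentially the same route as the result's actual source: the paper gives no proof of this proposition (it simply cites \cite{sango}), and the standard argument there is exactly what you do --- specialize the periodic component of the test function to $g\equiv 1$, use the $\mathcal{T}$-invariance of $\mu$ (giving $\|h(\mathcal{T}(\tfrac{\cdot}{\varepsilon})\omega)\|_{L^{p'}(\Omega\times\Lambda)}=|\Omega|^{1/p'}\|h\|_{L^{p'}(\Lambda)}$ uniformly in $\varepsilon$) to extend by density from $\mathcal{C}^{\infty}(\Lambda)$ to $L^{p'}(\Lambda)$, resp. $L^{p'}_{nv}(\Lambda)$, and upgrade to weak convergence via the uniform $L^{p}$ bound built into the definition of weak stochastic two-scale convergence. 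As a side point, your version correctly restores the factor $\psi(\omega)$ in the limit integral, which is evidently a typo on the right-hand side of the paper's displayed equation.
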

	\begin{prop}\cite{sango}\label{rem3} \\
		Let $a \in \mathcal{K}(\Omega; \mathcal{C}_{per}(Y; \mathcal{C}^{\infty}(\Lambda)))$. Then, as $\varepsilon\to 0$, 
		\begin{equation}
			\int_{\Omega\times\Lambda} \left| a\left(x, \mathcal{T}\left(\frac{x}{\varepsilon}\right)\omega, \frac{x}{\varepsilon^{2}} \right)\right|^{p} dxd\mu \rightarrow \iint_{\Omega\times\Lambda\times Y} \left|a(x,\omega,y)\right|^{p} dx d\mu dy,
		\end{equation}
		for $1 \leq p < \infty$.
	\end{prop}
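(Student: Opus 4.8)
The plan is to reduce the claim to a purely periodic mean-value statement by first averaging out the probabilistic variable. Set $b := |a|^{p}$. Since $a \in \mathcal{K}(\Omega; \mathcal{C}_{per}(Y; \mathcal{C}^{\infty}(\Lambda)))$ and $t\mapsto |t|^{p}$ is continuous, $b$ is jointly continuous in $(x,y)$, $Y$-periodic in $y$, and compactly supported in $x\in\Omega$; in particular it is bounded, and $b(x,\cdot,y)$ is $\mu$-measurable and integrable for each fixed $(x,y)$. It therefore suffices to show that $\int_{\Omega\times\Lambda} b(x,\mathcal{T}(x/\varepsilon)\omega, x/\varepsilon^{2})\,dx\,d\mu \to \iint_{\Omega\times\Lambda\times Y} b\,dx\,d\mu\,dy$.

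First I would fix $x\in\Omega$ and $\varepsilon>0$ and integrate in $\omega$. Because $\{\mathcal{T}(y)\}_{y\in\mathbb{R}^{N}}$ is a measure-preserving dynamical system, for each fixed shift $y_{1}=x/\varepsilon$ the map $\omega\mapsto\mathcal{T}(y_{1})\omega$ preserves $\mu$; hence, with $y_{0}=x/\varepsilon^{2}$ held fixed,
\begin{equation*}
\int_{\Lambda} b\!\left(x, \mathcal{T}\!\left(\tfrac{x}{\varepsilon}\right)\omega, \tfrac{x}{\varepsilon^{2}}\right) d\mu(\omega) = \int_{\Lambda} b\!\left(x, \omega, \tfrac{x}{\varepsilon^{2}}\right) d\mu(\omega) =: B\!\left(x, \tfrac{x}{\varepsilon^{2}}\right),
\end{equation*}
where $B(x,y) := \int_{\Lambda} b(x,\omega,y)\,d\mu(\omega)$. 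The $\mathcal{T}(x/\varepsilon)$-dependence vanishes entirely under this average, which is precisely the mechanism behind two-scale convergence in the mean. By dominated convergence $B$ inherits the regularity of $b$: it is continuous in $x$ with compact support and continuous and $Y$-periodic in $y$, so $B\in\mathcal{K}(\Omega; \mathcal{C}_{per}(Y))$. An application of Fubini's theorem (legitimate since $b$ is bounded with compact $x$-support and $\mu$ is finite) then yields $\int_{\Omega\times\Lambda} b(x,\mathcal{T}(x/\varepsilon)\omega, x/\varepsilon^{2})\,dx\,d\mu = \int_{\Omega} B(x, x/\varepsilon^{2})\,dx$.

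It remains to pass to the limit in the genuinely periodic integral $\int_{\Omega} B(x, x/\varepsilon^{2})\,dx$ as $\varepsilon^{2}\to 0$. This is the classical generalized Riemann--Lebesgue mean-value property: for $B\in\mathcal{K}(\Omega; \mathcal{C}_{per}(Y))$ one has $\int_{\Omega} B(x, x/\eta)\,dx \to \iint_{\Omega\times Y} B(x,y)\,dx\,dy$ as $\eta\to 0$. I would recall its proof by approximating $B$ uniformly by finite sums $\sum_{i}\varphi_{i}(x)g_{i}(y)$ with $\varphi_{i}\in\mathcal{K}(\Omega)$ and $g_{i}\in\mathcal{C}_{per}(Y)$ (tensor density), thereby reducing to the elementary fact that $\int_{\Omega}\varphi_{i}(x)g_{i}(x/\eta)\,dx \to \int_{\Omega}\varphi_{i}\,dx\int_{Y}g_{i}\,dy$ (using $|Y|=1$). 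Taking $\eta=\varepsilon^{2}$ and unwinding the definition of $B$ by Fubini gives $\iint_{\Omega\times Y} B\,dx\,dy = \iint_{\Omega\times\Lambda\times Y} b\,dx\,d\mu\,dy$, which is the desired limit.

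The only genuinely delicate point is the measure-invariance step: one must ensure that the shift $\omega\mapsto\mathcal{T}(x/\varepsilon)\omega$ preserves $\mu$ for each fixed $x,\varepsilon$, and that the substitution is valid for the merely continuous (not smooth) integrand $b$. This is guaranteed by the standing assumption that $\mathcal{T}$ is a measure-preserving $N$-dimensional dynamical system together with the measurability and boundedness of $b$; once this is in hand the remainder is routine. I expect no difficulty from the presence of the two distinct scales $\varepsilon$ and $\varepsilon^{2}$, since after the averaging only the single periodic scale $\varepsilon^{2}$ survives.
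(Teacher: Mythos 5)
Your proof is correct, and its engine is the same one the paper relies on: invariance of $\mu$ under $\mathcal{T}$ to eliminate the $\mathcal{T}(x/\varepsilon)$-dependence exactly (no ergodic theorem needed), Fubini, and the classical periodic mean-value lemma at the single surviving scale $\varepsilon^{2}$, with tensor-product density. (The paper cites \cite{sango} for this proposition and only writes out the surface analogue, Lemma \ref{dolem1}, but that proof uses precisely these ingredients.) Where you genuinely differ is in the order of operations, and your ordering buys something concrete. The paper's route first proves the mean-value formula for tensor functions $f(x)g(\omega)h(y)$, extends to general $a$ by density in all three variables, and then handles the exponent in a second step by asserting that $|a|^{p}$ stays in the same test class because it is a Banach algebra --- an assertion that is delicate for non-integer $p$, since $t\mapsto|t|^{p}$ is not smooth and the algebra property only directly produces integer powers, so smoothness of $|a|^{p}$ in $\omega$ is not obvious. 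You instead apply the measure invariance pointwise in $x$ to $b=|a|^{p}$ directly, which requires only boundedness and $\mu$-measurability in $\omega$ --- exactly the regularity $|a|^{p}$ retains --- and this collapses the problem at once to the deterministic Riemann--Lebesgue lemma for $B(x,y)=\int_{\Lambda}b(x,\omega,y)\,d\mu\in\mathcal{K}(\Omega;\mathcal{C}_{per}(Y))$, with density needed only in $(x,y)$. So your argument dispenses with density in the stochastic variable and sidesteps the Banach-algebra step entirely; the paper's ordering, on the other hand, is the one that transfers verbatim to the surface setting of Theorem \ref{pnptheo1}, where $\varepsilon\,d\sigma_{\varepsilon}$ replaces $dx$ and the tensor case is settled against \cite[Lemma 2.4]{allair3}. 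The one delicate point you flagged --- joint measurability of $(x,\omega)\mapsto b\bigl(x,\mathcal{T}(x/\varepsilon)\omega,x/\varepsilon^{2}\bigr)$, needed both for the Fubini swap and for the trace to define an element of $L^{1}(\Omega\times\Lambda)$ --- is indeed covered by the standing definition of a dynamical system (joint measurability of $(y,\omega)\mapsto\mathcal{T}(y)\omega$) together with the admissibility of the test class, so no gap remains.
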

	\begin{prop}\cite{sango}\label{rem1} \\
		Any bounded sequence $(u_{\varepsilon})_{\varepsilon\in E}$  in $L^{p}(\Omega\times\Lambda)$ admits a subsequence which is weakly stochastically two-scale convergent in $L^{p}(\Omega\times\Lambda)$.
	\end{prop}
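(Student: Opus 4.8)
The plan is to mirror the classical compactness theorem for two-scale convergence (Nguetseng--Allaire), transported to the stochastic-periodic setting, restricting to $1<p<\infty$ so that the relevant $L^{p'}$-space ($p'=p/(p-1)$) is reflexive. Write $\mathcal{A} := \mathcal{C}^{\infty}_{0}(\Omega)\otimes\mathcal{C}^{\infty}(\Lambda)\otimes\mathcal{C}^{\infty}_{per}(Y)$ for the space of admissible test functions, and for $a\in\mathcal{A}$ abbreviate $a^{\varepsilon}(x,\omega)=a\left(x,\mathcal{T}\left(\frac{x}{\varepsilon}\right)\omega,\frac{x}{\varepsilon^{2}}\right)$. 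The idea is to view
\[ L_{\varepsilon}(a) := \int_{\Omega\times\Lambda} u_{\varepsilon}(x,\omega)\,a^{\varepsilon}(x,\omega)\,dx\,d\mu \]
as a family of linear functionals on $\mathcal{A}$, to extract a subsequence along which $L_{\varepsilon}(a)$ converges for every $a$, and finally to represent the limit functional by an element $u_{0}\in L^{p}(\Omega\times\Lambda; L^{p}_{per}(Y))$, which will be the sought two-scale limit.

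First I would establish the uniform control. By Hölder's inequality, $|L_{\varepsilon}(a)|\le \|u_{\varepsilon}\|_{L^{p}(\Omega\times\Lambda)}\,\|a^{\varepsilon}\|_{L^{p'}(\Omega\times\Lambda)}$, and since $(u_{\varepsilon})$ is bounded, say by $C$, while Proposition \ref{rem3} gives $\|a^{\varepsilon}\|_{L^{p'}(\Omega\times\Lambda)}\to \|a\|_{L^{p'}(\Omega\times\Lambda\times Y)}$ as $\varepsilon\to0$, we obtain
\[ \limsup_{\varepsilon\to0}|L_{\varepsilon}(a)| \le C\,\|a\|_{L^{p'}(\Omega\times\Lambda\times Y)} \qquad\textup{for every } a\in\mathcal{A}. \]
Next, since $L^{p}(\Lambda)$ is separable, $\Omega$ is bounded and $\mathcal{C}_{per}(Y)$ is separable, the space $\mathcal{A}$ is separable for the $L^{p'}(\Omega\times\Lambda\times Y)$-norm; fix a countable dense subset $\{a_{k}\}_{k\ge1}$. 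For each $k$ the real sequence $(L_{\varepsilon}(a_{k}))_{\varepsilon\in E}$ is bounded, so a Cantor diagonal extraction yields a subsequence $E'\subseteq E$ along which $L_{\varepsilon}(a_{k})$ converges for every $k$.

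Then I would upgrade this to convergence on all of $\mathcal{A}$ by an $\varepsilon/3$-type argument: for $a\in\mathcal{A}$ and any $k$, $|L_{\varepsilon}(a)-L_{\varepsilon}(a_{k})|\le C\,\|(a-a_{k})^{\varepsilon}\|_{L^{p'}(\Omega\times\Lambda)}$, whose $\limsup$ is $\le C\,\|a-a_{k}\|_{L^{p'}(\Omega\times\Lambda\times Y)}$ by Proposition \ref{rem3}; choosing $a_{k}$ close to $a$ and using that $(L_{\varepsilon}(a_{k}))_{\varepsilon\in E'}$ is Cauchy shows that $(L_{\varepsilon}(a))_{\varepsilon\in E'}$ is Cauchy as well. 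Denote its limit by $L(a)$. The functional $L$ is linear and, by the first step, satisfies $|L(a)|\le C\,\|a\|_{L^{p'}(\Omega\times\Lambda\times Y)}$. Because $\mathcal{A}$ is dense in $L^{p'}(\Omega\times\Lambda; \mathcal{C}_{per}(Y))$ and the latter is dense in $L^{p'}(\Omega\times\Lambda\times Y)$ (as recalled after the definition of stochastic two-scale convergence), $L$ extends uniquely to a bounded linear functional on $L^{p'}(\Omega\times\Lambda\times Y)$. For $1<p<\infty$ this space is the dual of the reflexive space $L^{p}(\Omega\times\Lambda\times Y)$, so the Riesz representation theorem furnishes a unique $u_{0}\in L^{p}(\Omega\times\Lambda; L^{p}_{per}(Y))$ with $L(a)=\iint_{\Omega\times\Lambda\times Y} u_{0}\,a\,dx\,d\mu\,dy$; this is exactly the assertion that $u_{\varepsilon}\to u_{0}$ stochastically two-scale in $L^{p}(\Omega\times\Lambda)$-weak along $E'$.

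I expect the main obstacle to be the passage from the dense countable family to all test functions while keeping the bound under control: the per-$a$ norm-convergence of $\|a^{\varepsilon}\|_{L^{p'}}$ from Proposition \ref{rem3} is only asymptotic, and not a single uniform-in-$a$ inequality valid at fixed $\varepsilon$, so the argument must be organized through $\limsup$-estimates and the equi-Lipschitz dependence of $L_{\varepsilon}$ on the test function rather than through a naive uniform bound. A secondary point worth flagging is the restriction $1<p<\infty$: reflexivity of $L^{p}$ is precisely what makes the Riesz representation step produce a limit in $L^{p}$ rather than in a larger dual space.
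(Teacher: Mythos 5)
Your proposal is correct and follows essentially the same route as the paper: the paper (for this proposition, via \cite{sango}, and explicitly in its proof of the surface analogue, Theorem \ref{pnptheo1}) also views $a\mapsto\int_{\Omega\times\Lambda}u_{\varepsilon}a^{\varepsilon}\,dx\,d\mu$ as linear functionals, bounds them asymptotically by H\"older together with the norm-convergence result (Proposition \ref{rem3}, resp.\ Lemma \ref{dolem1}), and then extracts the limit $u_{0}\in L^{p}(\Omega\times\Lambda;L^{p}_{per}(Y))$. The only difference is that where the paper invokes the abstract extraction lemma (Proposition \ref{doprop1}, with $G=L^{p'}(\Omega\times\Lambda;L^{p'}_{per}(Y))$ reflexive), you re-derive that lemma by hand via separability, Cantor diagonalization, the $\varepsilon/3$ density argument, and Riesz representation --- which is precisely how that lemma is proved, and your flagging of the restriction $1<p<\infty$ is apt.
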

	\begin{prop}\cite{sango}\label{rem12} \\
		Let $1 < p < \infty$. Let $X$ be a norm closed convex subset of $W^{1,p}(\Omega)$, $\Omega$ being an open bounded subset of $\mathbb{R}^{N}$. Assume that $(u_{\varepsilon})_{\varepsilon\in E}$ is a sequence in $L^{p}(\Omega\times\Lambda)$ such that : 
		\begin{itemize}
			\item[i)] $u_{\varepsilon}(\cdot, \omega) \in X$ for all $\varepsilon \in E$ and for $\mu$-a.e. $\omega \in \Lambda$ ;
			\item[ii)] $(u_{\varepsilon})_{\varepsilon\in E}$ is bounded in $L^{p}(\Lambda; W^{1,p}(\Omega))$.
		\end{itemize}
		Then there exist $u_{0} \in W^{1,p}(\Omega; L_{nv}^{p}(\Lambda))$, $u_{1} \in L^{p}(\Omega; W^{1,p}_{\#}(\Lambda))$, $u_{2} \in L^{p}(\Omega\times \Lambda ; W^{1,p}_{\#}(Y))$ and a subsequence $E'$ from $E$ such that 
		\begin{itemize}
			\item[iii)] $u_{0}(\cdot, \omega) \in X$ for $\mu$-a.e. $\omega \in \Lambda$ and, as $E' \ni \epsilon \rightarrow 0$, 
			\item[iv)] $u_{\varepsilon} \rightarrow u_{0}$ stoch. in $L^{p}(\Omega\times\Lambda)$-weak 2s;
			\item[v)] $Du_{\varepsilon} \rightarrow Du_{0} + \overline{D}_{\omega}u_{1} + D_{y}u_{2}$ stoch. in $L^{p}(\Omega\times\Lambda)^{N}$-weak 2s.  
		\end{itemize}
	\end{prop}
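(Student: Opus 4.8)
The plan is to run the classical ``extraction--identification'' scheme for two-scale compactness, adapted to the two genuinely distinct microscopic scales here: the stochastic scale $x/\varepsilon$ entering through $\mathcal{T}(x/\varepsilon)\omega$ and the periodic scale $x/\varepsilon^{2}$. First I would record the differentiation rule for an admissible profile $a=a(x,\omega,y)$: writing $a^{\varepsilon}(x,\omega)=a(x,\mathcal{T}(x/\varepsilon)\omega,x/\varepsilon^{2})$, the chain rule for the dynamical system produces
\[
D\bigl(a^{\varepsilon}\bigr)=\bigl(D_{x}a\bigr)^{\varepsilon}+\tfrac{1}{\varepsilon}\bigl(\overline{D}_{\omega}a\bigr)^{\varepsilon}+\tfrac{1}{\varepsilon^{2}}\bigl(D_{y}a\bigr)^{\varepsilon}.
\]
Since $(u_{\varepsilon})$ is bounded in $L^{p}(\Lambda;W^{1,p}(\Omega))$, both $(u_{\varepsilon})$ and each component of $(Du_{\varepsilon})$ are bounded in $L^{p}(\Omega\times\Lambda)$; by Proposition~\ref{rem1} I extract a single subsequence $E'$ along which $u_{\varepsilon}\to u_{0}$ and $Du_{\varepsilon}\to\chi=(\chi_{1},\dots,\chi_{N})$, both stochastically two-scale, with $u_{0}\in L^{p}(\Omega\times\Lambda;L^{p}_{per}(Y))$ and $\chi\in L^{p}(\Omega\times\Lambda;L^{p}_{per}(Y))^{N}$. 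This already gives (iv).

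The second step identifies $u_{0}$. Starting from the integration-by-parts identity $\int_{\Omega\times\Lambda}Du_{\varepsilon}\cdot\Phi^{\varepsilon}=-\int_{\Omega\times\Lambda}u_{\varepsilon}\,\textup{div}(\Phi^{\varepsilon})$ for $\Phi\in\mathcal{D}(\Omega)\otimes\mathcal{C}^{\infty}(\Lambda)\otimes\mathcal{D}_{per}(Y)^{N}$, I insert the displayed differentiation rule, multiply by $\varepsilon^{2}$, and let $\varepsilon\to0$: the left side and all lower-order terms vanish and I obtain $\iint u_{0}\,\textup{div}_{y}\Phi=0$, hence $D_{y}u_{0}=0$ and $u_{0}$ is $y$-independent. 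Repeating with $\Phi$ independent of $y$ and multiplying by $\varepsilon$ gives $\int_{\Omega\times\Lambda}u_{0}\,\textup{div}_{\omega}\Phi=0$, i.e.\ $\overline{D}_{\omega}u_{0}=0$, so $u_{0}\in L^{p}_{nv}(\Lambda)$. Taking $\Phi(x)\psi(\omega)$ with $\psi\in L^{p'}_{nv}(\Lambda)$ (whose stochastic derivative vanishes, so no singular term arises and $\psi^{\varepsilon}=\psi$) and passing to the limit yields the weak $x$-gradient relation for $u_{0}$, whence $u_{0}\in W^{1,p}(\Omega;L^{p}_{nv}(\Lambda))$. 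Finally the constraint $u_{0}(\cdot,\omega)\in X$ in (iii) follows from hypothesis (i): the two-scale-in-the-mean consequence recalled earlier gives $u_{\varepsilon}(\cdot,\omega)\rightharpoonup u_{0}(\cdot,\omega)$ weakly in $W^{1,p}(\Omega)$ for $\mu$-a.e.\ $\omega$, and since $X$ is convex and norm-closed a Mazur-lemma argument keeps the weak limit in $X$.

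The third and decisive step extracts the two correctors from the fluctuation $\chi-Du_{0}$. Restricting to test fields $\Phi$ that are \emph{doubly solenoidal}, meaning $\textup{div}_{y}\Phi=0$ and $\textup{div}_{\omega}\Phi=0$ (with $Y$-periodicity in $y$ and the appropriate invariance/mean conditions in $\omega$), the two singular terms in the differentiation rule drop out identically; passing to the limit in the integration-by-parts identity and integrating by parts in $x$ via $u_{0}\in W^{1,p}(\Omega)$ gives
\[
\iint_{\Omega\times\Lambda\times Y}\bigl(\chi-Du_{0}\bigr)\cdot\Phi\,dx\,d\mu\,dy=0
\]
for all such $\Phi$. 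I then invoke the Weyl--Helmholtz (potential/solenoidal) decomposition of $L^{p}(\Lambda\times Y)^{N}$ into the closure of the stochastic gradients $\overline{D}_{\omega}W^{1,p}_{\#}(\Lambda)$, the periodic gradients $D_{y}W^{1,p}_{\#}(Y)$, and the doubly solenoidal fields: being annihilated by the last subspace, $\chi-Du_{0}$ lies, for a.e.\ $x$, in the sum of the two gradient subspaces. A de Rham--type potential lemma (one version for the dynamical system, one for the torus) then recovers $u_{1}\in L^{p}(\Omega;W^{1,p}_{\#}(\Lambda))$ and $u_{2}\in L^{p}(\Omega\times\Lambda;W^{1,p}_{\#}(Y))$ with $\chi-Du_{0}=\overline{D}_{\omega}u_{1}+D_{y}u_{2}$, which is exactly (v).

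The main obstacle I anticipate is this third step: cleanly decoupling the two micro-scales so that the slower stochastic oscillation contributes only $\overline{D}_{\omega}u_{1}$ and the faster periodic oscillation only $D_{y}u_{2}$. Concretely this demands the correct hierarchy of test-function scalings (multiplying by $\varepsilon$ versus $\varepsilon^{2}$) together with a rigorous simultaneous potential/solenoidal decomposition for the dynamical system on $\Lambda$ and for the torus $Y$. Verifying that the doubly solenoidal annihilator is precisely the orthogonal complement of the two gradient spaces --- equivalently, establishing the surjectivity onto curl-free fields that underlies the de Rham lemma --- is the delicate functional-analytic point, and is where the separability of $L^{p}(\Lambda)$ and the density results recalled after the definition of two-scale convergence will be essential.
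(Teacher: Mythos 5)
The paper does not actually prove this proposition: it is recalled verbatim from \cite{sango}, with the reader referred there (and to \cite{sango2,gabri}) for the proof. Your scheme --- extraction via Proposition \ref{rem1} for both $u_{\varepsilon}$ and $Du_{\varepsilon}$, identification of $u_{0}$ by testing the integration-by-parts identity against $\varepsilon^{2}\Phi^{\varepsilon}$ and then $\varepsilon\Phi^{\varepsilon}$ to kill the $y$- and $\omega$-dependence successively, then identification of the gradient limit against doubly solenoidal fields followed by a de Rham/Helmholtz argument on $L^{p}(\Lambda\times Y)^{N}$ --- is essentially the proof of the cited source, so in substance you have reconstructed the intended argument. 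One reassurance on the point you flag as delicate: the closedness of $\overline{D}_{\omega}W^{1,p}_{\#}(\Lambda)+D_{y}W^{1,p}_{\#}(Y)$ comes for free from the structure of the two summands, since every field $D_{y}u_{2}$ has zero mean over $Y$ while every field $\overline{D}_{\omega}u_{1}$ is $y$-independent; hence the averaging map $v\mapsto\int_{Y}v\,dy$ is a bounded projection onto the first summand annihilating the second, the sum is a closed direct sum, and the annihilator argument yields $u_{1}$ and $u_{2}$ themselves rather than limits of sums. This averaging is also the cleanest way to decouple the scales: identify $\overline{D}_{\omega}u_{1}$ from $\int_{Y}(\chi-Du_{0})\,dy$ first, then apply the periodic de Rham lemma to the $y$-mean-zero remainder.

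The one step that is genuinely false as written is in your treatment of (iii): stochastic two-scale convergence (or two-scale convergence in the mean) gives weak convergence of $u_{\varepsilon}$ in $L^{p}(\Omega\times\Lambda)$, and in no way does it give $u_{\varepsilon}(\cdot,\omega)\rightharpoonup u_{0}(\cdot,\omega)$ weakly in $W^{1,p}(\Omega)$ for $\mu$-a.e.\ \emph{fixed} $\omega$; no such pointwise-in-$\omega$ weak convergence is available from the hypotheses. The repair is exactly the Mazur argument you invoke, but it must be run in the Bochner space and \emph{before} any pointwise statement: by (ii) and reflexivity of $L^{p}(\Lambda;W^{1,p}(\Omega))$ for $1<p<\infty$, one has $u_{\varepsilon}\rightharpoonup u_{0}$ weakly in $L^{p}(\Lambda;W^{1,p}(\Omega))$ along $E'$ (the limit being identified with $u_{0}$ through (iv)--(v)); Mazur's lemma then produces convex combinations $v_{n}$ of the $u_{\varepsilon}$ with $v_{n}\to u_{0}$ strongly in $L^{p}(\Lambda;W^{1,p}(\Omega))$; a further subsequence satisfies $v_{n}(\cdot,\omega)\to u_{0}(\cdot,\omega)$ in $W^{1,p}(\Omega)$-norm for $\mu$-a.e.\ $\omega$; each $v_{n}(\cdot,\omega)$ lies in $X$ by hypothesis (i) and convexity, and the norm-closedness of $X$ concludes $u_{0}(\cdot,\omega)\in X$. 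With this reordering your proof is complete and agrees with the argument of \cite{sango}.
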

	However, in order to deal with the convergence of a product of sequences, we need to define the concept of strong stochastic two-scale convergence.
	\begin{defn}
		A sequence $(u_{\varepsilon})_{\varepsilon>0}$ in $L^{p}(\Omega\times\Lambda)$, $1\leq p < \infty$ is said to strongly stochastically two-scale converge in $L^{p}(\Omega\times\Lambda)$ to some $u_{0} \in L^{p}(\Omega\times\Lambda; L^{p}_{per}(Y))$ if it is weakly stochastically two-scale convergent and further satisfies the following condition :  
		\begin{equation}
			\lim_{\varepsilon\to 0} \|u_{\varepsilon}\|_{L^{p}(\Omega\times\Lambda)} = \|u_{0}\|_{L^{p}(\Omega\times\Lambda\times Y)}.
		\end{equation}
		We can express this by writing $u_{\varepsilon} \rightarrow u_{0}$ stoch. in $L^{p}(\Omega\times\Lambda)$-strong 2s. 
	\end{defn}
	From the above definition, the uniqueness of the limit of such a sequence is ensured. The strong stochastic two-scale convergence is a generalization of the strong convergence as one can see in the following result.
	\begin{prop}\cite{sango2}\label{c2eq1}\\
		Let $(u_{\varepsilon})_{\varepsilon\in E} \subset L^{p}(\Omega\times\Lambda)$ $(1\leq p<\infty)$ be a strongly convergent sequence in $L^{p}(\Omega\times\Lambda)$ to some $u_{0}\in L^{p}(\Omega\times\Lambda)$. Then $(u_{\varepsilon})_{\varepsilon\in E}$ strongly stochastically two-scale converges in $L^{p}(\Omega\times\Lambda)$ towards $u_{0}$. 	
	\end{prop}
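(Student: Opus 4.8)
The plan is to verify the two defining requirements of strong stochastic two-scale convergence towards $u_{0}$, the latter being regarded as an element of $L^{p}(\Omega\times\Lambda;L^{p}_{per}(Y))$ that does not depend on $y$: namely (a) the weak stochastic two-scale convergence $u_{\varepsilon}\to u_{0}$, and (b) the norm identity $\lim_{\varepsilon\to 0}\|u_{\varepsilon}\|_{L^{p}(\Omega\times\Lambda)}=\|u_{0}\|_{L^{p}(\Omega\times\Lambda\times Y)}$.

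I would dispose of (b) first, since it is immediate. Because $Y=[0,1]^{N}$ has unit measure and $u_{0}$ does not depend on $y$, one has $\|u_{0}\|_{L^{p}(\Omega\times\Lambda\times Y)}^{p}=\iint_{\Omega\times\Lambda\times Y}|u_{0}(x,\omega)|^{p}\,dxd\mu dy=\|u_{0}\|_{L^{p}(\Omega\times\Lambda)}^{p}$. On the other hand, strong convergence $u_{\varepsilon}\to u_{0}$ in $L^{p}(\Omega\times\Lambda)$ forces $\|u_{\varepsilon}\|_{L^{p}(\Omega\times\Lambda)}\to\|u_{0}\|_{L^{p}(\Omega\times\Lambda)}$ by the reverse triangle inequality. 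Combining the two equalities yields (b).

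For (a), fix a test function $a\in\mathcal{C}^{\infty}_{0}(\Omega)\otimes\mathcal{C}^{\infty}(\Lambda)\otimes\mathcal{C}^{\infty}_{per}(Y)$ and split
\[
\int_{\Omega\times\Lambda}u_{\varepsilon}a^{\varepsilon}\,dxd\mu=\int_{\Omega\times\Lambda}(u_{\varepsilon}-u_{0})a^{\varepsilon}\,dxd\mu+\int_{\Omega\times\Lambda}u_{0}a^{\varepsilon}\,dxd\mu.
\]
The first integral is bounded, via Hölder's inequality, by $\|u_{\varepsilon}-u_{0}\|_{L^{p}(\Omega\times\Lambda)}\,\|a^{\varepsilon}\|_{L^{p'}(\Omega\times\Lambda)}$. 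Since $a$ belongs in particular to $\mathcal{K}(\Omega;\mathcal{C}_{per}(Y;\mathcal{C}^{\infty}(\Lambda)))$, Proposition \ref{rem3} applied with exponent $p'$ (for $p>1$; when $p=1$ one uses instead the trivial bound $\|a^{\varepsilon}\|_{L^{\infty}(\Omega\times\Lambda)}\leq\|a\|_{L^{\infty}(\Omega\times\Lambda\times Y)}$) shows that $\|a^{\varepsilon}\|_{L^{p'}(\Omega\times\Lambda)}$ converges, hence stays bounded; as $\|u_{\varepsilon}-u_{0}\|_{L^{p}(\Omega\times\Lambda)}\to 0$ by hypothesis, the first integral tends to $0$. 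It therefore remains to prove that $\int_{\Omega\times\Lambda}u_{0}a^{\varepsilon}\,dxd\mu\to\iint_{\Omega\times\Lambda\times Y}u_{0}a\,dxd\mu dy$.

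This last convergence is the genuine core of the statement, and I expect it to be the main obstacle. The natural route is a density argument: approximate $u_{0}$ in $L^{p}(\Omega\times\Lambda)$ by functions $u_{0}^{\delta}$ drawn from the dense class $\mathcal{C}^{\infty}_{0}(\Omega)\otimes\mathcal{C}^{\infty}(\Lambda)$ (dense since $\mathcal{C}^{\infty}(\Lambda)$ is dense in $L^{p}(\Lambda)$), for which the product $u_{0}^{\delta}\otimes a$ is admissible, so that the mean-value convergence built into the notion of admissibility yields the limit for each $\delta$; one then removes $\delta$ using the uniform bound on $\|a^{\varepsilon}\|_{L^{p'}}$ together with the continuity of the two-scale integral. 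The delicate technical point is that in the integrand $u_{0}\,a^{\varepsilon}$ the factor $u_{0}$ carries the untransformed variable $\omega$, whereas $a^{\varepsilon}$ carries $\mathcal{T}(x/\varepsilon)\omega$, so that $u_{0}^{\delta}\,a^{\varepsilon}$ is not literally the trace of the admissible product $u_{0}^{\delta}\otimes a$; reconciling these is exactly where the measure-preserving structure of the dynamical system $\mathcal{T}$ and the admissibility hypotheses must be invoked, and it is the step that requires the most care. Once this convergence is secured, (a) follows, and together with (b) it gives the strong stochastic two-scale convergence of $(u_{\varepsilon})_{\varepsilon\in E}$ towards $u_{0}$.
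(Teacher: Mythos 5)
Your part (b) is correct, and so are the splitting of the pairing and the treatment of the term $\int_{\Omega\times\Lambda}(u_{\varepsilon}-u_{0})a^{\varepsilon}\,dxd\mu$ via H\"older and Proposition \ref{rem3} (including the $p=1$ case). But note first that the paper itself contains no proof of this proposition --- it is quoted from \cite{sango2} --- so your attempt stands alone; and it stops exactly where the proposition begins. The convergence $\int_{\Omega\times\Lambda}u_{0}\,a^{\varepsilon}\,dxd\mu\to\iint_{\Omega\times\Lambda\times Y}u_{0}a\,dxd\mu dy$, i.e.\ the assertion that the \emph{constant} sequence $u_{0}$ weakly stochastically two-scale converges to itself, is the entire content of the statement, and you explicitly defer it (``once this convergence is secured''). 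What you have written is therefore a reduction to the crux, not a proof of it.

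The deeper problem is that the density route you sketch cannot close this gap, because for a general $u_{0}\in L^{p}(\Omega\times\Lambda)$ the deferred claim is \emph{false}, so no approximation of $u_{0}$ in $L^{p}(\Omega\times\Lambda)$ can establish it. Take $\Lambda=\mathbb{T}^{N}$ with the measure-preserving ergodic shift $\mathcal{T}(z)\omega=\omega+z$, $u_{\varepsilon}=u_{0}(\omega)=\cos(2\pi\omega_{1})$, and the test function $a(x,\omega,y)=\varphi(x)\cos(2\pi\omega_{1})$. Then $\int_{\Lambda}\cos(2\pi\omega_{1})\cos\left(2\pi(\omega_{1}+x_{1}/\varepsilon)\right)d\mu=\frac{1}{2}\cos(2\pi x_{1}/\varepsilon)$, so the pairing equals $\frac{1}{2}\int_{\Omega}\varphi(x)\cos(2\pi x_{1}/\varepsilon)\,dx\to 0$ by Riemann--Lebesgue, whereas the claimed limit is $\frac{1}{2}\int_{\Omega}\varphi\,dx\neq 0$. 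In general, the weak stochastic two-scale limit of a constant sequence is the conditional expectation of $u_{0}$ with respect to the $\sigma$-algebra of $\mathcal{T}$-invariant sets (mean ergodic theorem), not $u_{0}$ itself. The mixed-argument product $u_{0}(x,\omega)\,a\left(x,\mathcal{T}\left(\frac{x}{\varepsilon}\right)\omega,\frac{x}{\varepsilon^{2}}\right)$, whose non-trace structure you correctly flagged as the delicate point, becomes a trace of the admissible function $u_{0}a$ precisely when $u_{0}(x,\cdot)$ is $\mathcal{T}$-invariant: then invariance of $\mu$ gives $\int_{\Lambda}u_{0}a^{\varepsilon}\,d\mu=\int_{\Lambda}(u_{0}a)^{\varepsilon}\,d\mu$ and the mean-value property of admissible functions concludes. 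This invariance is not a technicality to be ``reconciled'' but the missing hypothesis: the statement is correct (and your argument closes immediately) for $u_{0}\in L^{p}(\Omega;L^{p}_{nv}(\Lambda))$, which is consistent with how the proposition is used in the paper, where all macroscopic limits ($\varpi_{\pm,0}$, $\varUpsilon_{0}$, and $u_{0}$ in Proposition \ref{rem12}) lie in spaces built on $L^{2}_{nv}(\Lambda)$. Without that restriction, the step you left open does not just require care --- it fails.
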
 
	The next result gives the convergence criterion of a product of sequences and is really crucial in our homogenization process in section \ref{sect3}. 
	\begin{prop}\cite{sango2}\label{c2eq2}\\
		Let $1< p, q< \infty$ and $r\geq 1$ be such that $\frac{1}{p} + \frac{1}{q}=\frac{1}{r} \leq 1$. Assume that $(u_{\varepsilon})_{\varepsilon\in E} \subset L^{q}(\Omega\times\Lambda)$ is weakly stochastically
		two-scale convergent in $ L^{q}(\Omega\times\Lambda)$ to some $u_{0} \in  L^{q}(\Omega\times\Lambda; L^{q}_{per}(Y))$, and $(v_{\varepsilon})_{\varepsilon\in E} \subset L^{p}(\Omega\times\Lambda)$ is strongly stochastically
		two-scale convergent in $ L^{p}(\Omega\times\Lambda)$ to some $v_{0} \in  L^{p}(\Omega\times\Lambda; L^{p}_{per}(Y))$. Then the sequence $(u_{\varepsilon}v_{\varepsilon})_{\varepsilon\in E}$ is weakly stochastically two-scale convergence in $L^{r}(\Omega\times\Lambda)$ to $u_{0}v_{0}$. 
	\end{prop}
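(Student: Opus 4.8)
The plan is to verify the definition of weak stochastic two-scale convergence directly for the product sequence $(u_{\varepsilon}v_{\varepsilon})_{\varepsilon\in E}$. First I would record boundedness: by the generalized Hölder inequality with exponents $q,p$ (legitimate precisely because $\frac{1}{p}+\frac{1}{q}=\frac{1}{r}\leq 1$) one has $\|u_{\varepsilon}v_{\varepsilon}\|_{L^{r}(\Omega\times\Lambda)}\leq\|u_{\varepsilon}\|_{L^{q}(\Omega\times\Lambda)}\|v_{\varepsilon}\|_{L^{p}(\Omega\times\Lambda)}$, and both factors are bounded since weakly (resp. strongly) stochastically two-scale convergent sequences are bounded. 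The same inequality applied on $\Omega\times\Lambda\times Y$ gives $u_{0}v_{0}\in L^{r}(\Omega\times\Lambda;L^{r}_{per}(Y))$. It then remains to prove, for every test function $a\in\mathcal{C}^{\infty}_{0}(\Omega)\otimes\mathcal{C}^{\infty}(\Lambda)\otimes\mathcal{C}^{\infty}_{per}(Y)$, that
$$\int_{\Omega\times\Lambda}u_{\varepsilon}v_{\varepsilon}\,a^{\varepsilon}\,dxd\mu\longrightarrow\iint_{\Omega\times\Lambda\times Y}u_{0}v_{0}\,a\,dxd\mu dy.$$

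The core device is an approximation-and-splitting argument. Fix $\delta>0$ and choose $\phi$ in the dense tensor class $\mathcal{C}^{\infty}_{0}(\Omega)\otimes\mathcal{C}^{\infty}(\Lambda)\otimes\mathcal{C}^{\infty}_{per}(Y)$ with $\|v_{0}-\phi\|_{L^{p}(\Omega\times\Lambda\times Y)}<\delta$. Writing $\phi^{\varepsilon}$ for its trace, I would decompose
$$\int_{\Omega\times\Lambda}u_{\varepsilon}v_{\varepsilon}a^{\varepsilon}=\int_{\Omega\times\Lambda}u_{\varepsilon}(v_{\varepsilon}-\phi^{\varepsilon})a^{\varepsilon}+\int_{\Omega\times\Lambda}u_{\varepsilon}(\phi a)^{\varepsilon}.$$
For the second term, $\phi a$ is again an admissible (smooth, compactly supported) test function, so weak two-scale convergence of $u_{\varepsilon}$ gives $\int u_{\varepsilon}(\phi a)^{\varepsilon}\to\iint u_{0}\phi a$; and since $u_{0}a\in L^{q}\subseteq L^{p'}$ (again using $\frac{1}{r}\leq 1$ and finiteness of the measure), the right-hand side tends to $\iint u_{0}v_{0}a$ as $\delta\to 0$, so this term yields the desired limit up to an error $O(\delta)$.

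The delicate part is the first term. Generalized Hölder bounds it by $\|u_{\varepsilon}\|_{L^{q}}\|v_{\varepsilon}-\phi^{\varepsilon}\|_{L^{p}}\|a\|_{L^{\infty}}$, so everything reduces to controlling $\limsup_{\varepsilon}\|v_{\varepsilon}-\phi^{\varepsilon}\|_{L^{p}(\Omega\times\Lambda)}$, and this is exactly where the strong stochastic two-scale convergence of $v_{\varepsilon}$ enters; it is the main obstacle. Since $\phi$ is admissible, Proposition \ref{rem3} yields $\|\phi^{\varepsilon}\|_{L^{p}}\to\|\phi\|_{L^{p}(\Omega\times\Lambda\times Y)}$ together with $\phi^{\varepsilon}\to\phi$ weakly two-scale, i.e. $\phi^{\varepsilon}\to\phi$ strongly two-scale; combined with $v_{\varepsilon}\to v_{0}$ strongly two-scale, I would argue that $v_{\varepsilon}-\phi^{\varepsilon}\to v_{0}-\phi$ strongly stochastically two-scale, whence $\|v_{\varepsilon}-\phi^{\varepsilon}\|_{L^{p}(\Omega\times\Lambda)}\to\|v_{0}-\phi\|_{L^{p}(\Omega\times\Lambda\times Y)}<\delta$. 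Establishing that the difference of two strongly two-scale convergent sequences is again strongly two-scale convergent, so that the $L^{p}$-norm passes to the difference, is the crux: it does not follow formally from weak convergence plus norm convergence of each sequence, and I would handle it through the norm-convergence clause of the definition together with a Radon--Riesz/uniform-convexity argument in $L^{p}$ (equivalently, Visintin's characterization of strong two-scale convergence by admissible approximants). Granting this, the first term is $O(\delta)$ uniformly in $\varepsilon$, and letting first $\varepsilon\to 0$ and then $\delta\to 0$ completes the verification, giving $u_{\varepsilon}v_{\varepsilon}\to u_{0}v_{0}$ stoch. in $L^{r}(\Omega\times\Lambda)$-weak 2s.
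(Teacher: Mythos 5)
The paper never proves this proposition: it is imported verbatim from Sango--Woukeng \cite{sango2} (the text says the proofs of this block of results ``can be found in \cite{sango} or \cite{sango2}''), so the relevant comparison is with the argument there --- and your density-splitting scheme is essentially that argument: approximate $v_{0}$ in $L^{p}(\Omega\times\Lambda\times Y)$ by a tensor function $\phi$, split the integral, pass to the limit in $\int u_{\varepsilon}(\phi a)^{\varepsilon}$ using that $\phi a$ is again an admissible tensor test function, and reduce everything to controlling $\limsup_{\varepsilon}\|v_{\varepsilon}-\phi^{\varepsilon}\|_{L^{p}(\Omega\times\Lambda)}$. One precision on the step you correctly single out as the crux: Radon--Riesz cannot be invoked literally, since $v_{\varepsilon}$ and $v_{0}$ do not live in a common space (no unfolding operator is developed in this stochastic-periodic setting, either here or in \cite{sango2}), and the claim that $v_{\varepsilon}-\phi^{\varepsilon}$ is again \emph{strongly} two-scale convergent is more than you need. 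What suffices, and what the uniform-convexity idea actually delivers, is a one-sided estimate from Clarkson's inequality: for $p\geq 2$,
\begin{equation*}
\Bigl\|\tfrac{v_{\varepsilon}-\phi^{\varepsilon}}{2}\Bigr\|_{L^{p}(\Omega\times\Lambda)}^{p}\leq \tfrac{1}{2}\|v_{\varepsilon}\|_{L^{p}(\Omega\times\Lambda)}^{p}+\tfrac{1}{2}\|\phi^{\varepsilon}\|_{L^{p}(\Omega\times\Lambda)}^{p}-\Bigl\|\tfrac{v_{\varepsilon}+\phi^{\varepsilon}}{2}\Bigr\|_{L^{p}(\Omega\times\Lambda)}^{p},
\end{equation*}
whose right-hand side has $\limsup$ at most $\tfrac{1}{2}\|v_{0}\|^{p}+\tfrac{1}{2}\|\phi\|^{p}-\bigl\|\tfrac{v_{0}+\phi}{2}\bigr\|^{p}$ (norms now over $\Omega\times\Lambda\times Y$), by the norm-convergence clause for $v_{\varepsilon}$, Proposition \ref{rem3} for $\phi^{\varepsilon}$, and weak two-scale lower semicontinuity of the norm applied to $v_{\varepsilon}+\phi^{\varepsilon}$ for the subtracted term. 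This bound is continuous in $\phi$ and vanishes at $\phi=v_{0}$, hence is small with $\delta$, which is all your splitting requires (for $1<p<2$ use Clarkson's second inequality; for $p=2$ a direct expansion of the square suffices, which is the case treated in \cite{bourgeat}). With that substitution for your Radon--Riesz step, the proof is complete and coincides with the cited one.
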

	\begin{cor}\cite{sango2}\label{c2eq3}  \\
		Let $(u_{\varepsilon})_{\varepsilon\in E} \subset L^{p}(\Omega\times\Lambda)$ and $(v_{\varepsilon})_{\varepsilon\in E} \subset L^{p'}(\Omega\times\Lambda)\cap L^{\infty}(\Omega\times\Lambda)$ $(1<p<\infty \, \textit{and}\, p'=p/(p-1))$ be two sequences such that : 
		\begin{itemize}
			\item[(i)] $u_{\varepsilon} \rightarrow u_{0}$ stoch. in $ L^{p}(\Omega\times\Lambda)$-weakly 2s;
			\item[(ii)] $v_{\varepsilon} \rightarrow v_{0}$ stoch. in $ L^{p'}(\Omega\times\Lambda)$-strong 2s;
			\item[(iii)] $(v_{\varepsilon})_{\varepsilon\in E}$ is bounded in $ L^{\infty}(\Omega\times\Lambda)$.
		\end{itemize}
		Then $u_{\varepsilon}v_{\varepsilon} \rightarrow u_{0}v_{0}$ stoch. in $L^{p}(\Omega\times\Lambda)$-weakly 2s.
	\end{cor}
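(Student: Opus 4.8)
The plan is to derive the corollary directly from Proposition \ref{c2eq2} by taking the exponents at their endpoint and then using the $L^{\infty}$-bound (iii) to upgrade the integrability of the resulting limit relation. Since $1<p<\infty$, the conjugate exponent $p'=p/(p-1)$ also lies in $(1,\infty)$, so both $p$ and $p'$ are admissible exponents for Proposition \ref{c2eq2}. I would apply that proposition with the weakly convergent factor taken to be $(u_{\varepsilon})$ in $L^{p}(\Omega\times\Lambda)$ (playing the role of the exponent $q=p$) and the strongly convergent factor taken to be $(v_{\varepsilon})$ viewed in $L^{p'}(\Omega\times\Lambda)$ (legitimate since $v_{\varepsilon}\in L^{p'}\cap L^{\infty}$ and, by (ii), strongly two-scale converges there). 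The compatibility condition then reads $\frac{1}{p'}+\frac{1}{p}=1=:\frac{1}{r}$, so $r=1\geq 1$ and $\frac{1}{r}\leq 1$; hence Proposition \ref{c2eq2} yields $u_{\varepsilon}v_{\varepsilon}\to u_{0}v_{0}$ stoch. in $L^{1}(\Omega\times\Lambda)$-weak 2s, which means concretely that
\[
\int_{\Omega\times\Lambda} u_{\varepsilon}v_{\varepsilon}\, a^{\varepsilon}\, dx\,d\mu \longrightarrow \iint_{\Omega\times\Lambda\times Y} u_{0}v_{0}\, a\, dx\,d\mu\,dy
\]
for every $a\in\mathcal{C}^{\infty}_{0}(\Omega)\otimes\mathcal{C}^{\infty}(\Lambda)\otimes\mathcal{C}^{\infty}_{per}(Y)$, this being precisely the same test-function class that appears in the definition of weak two-scale convergence at every finite exponent.

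The second step is to note that hypothesis (iii) lets us replace $L^{1}$ by $L^{p}$ in this statement. Setting $M:=\sup_{\varepsilon\in E}\|v_{\varepsilon}\|_{L^{\infty}(\Omega\times\Lambda)}<\infty$, I would first check that the product stays bounded in $L^{p}$, since $\|u_{\varepsilon}v_{\varepsilon}\|_{L^{p}(\Omega\times\Lambda)}\leq \|v_{\varepsilon}\|_{L^{\infty}}\|u_{\varepsilon}\|_{L^{p}}\leq M\,\sup_{\varepsilon}\|u_{\varepsilon}\|_{L^{p}}$, where the last supremum is finite because $(u_{\varepsilon})$ is, by hypothesis (i), a weakly two-scale convergent and hence bounded sequence in $L^{p}$. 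It then only remains to verify that the candidate limit $u_{0}v_{0}$ belongs to $L^{p}(\Omega\times\Lambda;L^{p}_{per}(Y))$, which is exactly the membership required by the definition of $L^{p}$-weak two-scale convergence.

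For that membership I would show that the uniform bound (iii) is inherited by the two-scale limit $v_{0}$, namely $v_{0}\in L^{\infty}(\Omega\times\Lambda\times Y)$ with $\|v_{0}\|_{L^{\infty}}\leq M$. This follows by testing the weak two-scale convergence of $(v_{\varepsilon})$ against admissible functions $a$ and using $\bigl|\int v_{\varepsilon}a^{\varepsilon}\bigr|\leq M\int|a^{\varepsilon}|\to M\iint|a|$ (the convergence of $\int|a^{\varepsilon}|$ being the admissibility property, cf. Proposition \ref{rem3}), which forces $\bigl|\iint v_{0}a\bigr|\leq M\iint|a|$ for all such $a$ and hence $|v_{0}|\leq M$ almost everywhere by density of the test class in $L^{1}(\Omega\times\Lambda;L^{1}_{per}(Y))$. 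Granting $v_{0}\in L^{\infty}$ together with $u_{0}\in L^{p}(\Omega\times\Lambda;L^{p}_{per}(Y))$ from (i), the product $u_{0}v_{0}$ lies in $L^{p}(\Omega\times\Lambda;L^{p}_{per}(Y))$. Collecting the three facts — convergence of $\int u_{\varepsilon}v_{\varepsilon}a^{\varepsilon}$ toward $\iint u_{0}v_{0}a$ for every test function $a$ (Step 1), $L^{p}$-boundedness of $(u_{\varepsilon}v_{\varepsilon})$, and $u_{0}v_{0}\in L^{p}(\Omega\times\Lambda;L^{p}_{per}(Y))$ — is exactly the definition of $u_{\varepsilon}v_{\varepsilon}\to u_{0}v_{0}$ stoch. in $L^{p}(\Omega\times\Lambda)$-weak 2s, as claimed.

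The main obstacle I anticipate is conceptual rather than computational: Proposition \ref{c2eq2} only delivers convergence in $L^{1}$, because the exponent balance forces $r=1$ and not $r=p$, so the whole content of the corollary is the upgrade from $L^{1}$ to $L^{p}$. The one genuinely non-formal ingredient in that upgrade is the lower-semicontinuity-type fact that the uniform $L^{\infty}$ bound survives in the two-scale limit $v_{0}$; once this is secured, the remainder is just a matter of matching the clauses in the definition of weak two-scale convergence, since the class of admissible test functions is the same for $L^{1}$ and for $L^{p}$.
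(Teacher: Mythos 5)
Your proof is correct; note that the paper itself gives no proof of this corollary (it is stated as a quotation from \cite{sango2}), and your route --- applying Proposition \ref{c2eq2} at the endpoint $\frac{1}{p}+\frac{1}{p'}=1=\frac{1}{r}$ to get $L^{1}$-weak two-scale convergence of the product, then upgrading to $L^{p}$ via the $L^{p}$-boundedness of $(u_{\varepsilon}v_{\varepsilon})$ and the duality argument showing $\|v_{0}\|_{L^{\infty}(\Omega\times\Lambda\times Y)}\leq M$ --- is exactly the standard derivation in that reference, made legitimate by the fact that the test class $\mathcal{C}^{\infty}_{0}(\Omega)\otimes\mathcal{C}^{\infty}(\Lambda)\otimes\mathcal{C}^{\infty}_{per}(Y)$ is the same for every exponent $1\leq p<\infty$. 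One cosmetic remark: the boundedness of $(u_{\varepsilon})$ in $L^{p}(\Omega\times\Lambda)$ does not need to be deduced from convergence, since it is built into the paper's definition of weak stochastic two-scale convergence, so hypothesis (i) supplies it directly.
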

	
	Looking at the second term on the left hand side of equation (\ref{c2tc11}), it is necessary for us to extend the concept of stochastic two-scale convergence of Sango and Woukeng \cite{sango} to periodic surfaces. This is the subject of the following subsection.
	
	\subsection{stochastic two-scale convergence on periodic surfaces}
	 Let $U$ be a bounded open set in $\mathbb{R}^{N}$, $Z$ the unit periodicity cell $[0,1]^{N}$ and $\mathcal{O}$ be an open subset of $Z$ with a smooth boundary $\Gamma$. We set $Z^{\star} = Z\backslash \overline{\mathcal{O}}$ and we identify $\mathcal{O}$, $Z^{\star}$ and $\Gamma$ with their images by their extension by $Z$-periodicity to the whole space $\mathbb{R}^{N}$. Then, for all $\varepsilon >0$, we define a perforated domain $U_{\varepsilon}$ by 
	\begin{equation}\label{doeq1}
		U_{\varepsilon} = \left\{x\in U \; / \; \frac{x}{\varepsilon} \in Z^{\star} \right\}.
	\end{equation}
	We further define a $(N-1)$-dimensional periodic surface $\Gamma_{\varepsilon}$ by 
	\begin{equation}\label{doeq2}
		\Gamma_{\varepsilon} = \left\{x\in U \; / \; \frac{x}{\varepsilon} \in \Gamma \right\},
	\end{equation}
	which is nothing else than the part $\partial U_{\varepsilon}$ lying inside $U$. We denote by $d\sigma(z), \, z\in Z$, and $d\sigma_{\varepsilon}(x), \, x\in U$, the surface measure on $\Gamma$, and $\Gamma_{\varepsilon}$ respectively. The spaces of $p$-integrable functions, with respect to these measures on $\Gamma$, and $\Gamma_{\varepsilon}$, are denoted by $L^{p}(\Gamma)$, and $L^{p}(\Gamma_{\varepsilon})$ respectively. The following theorem is a generalization to the periodic surfaces of \cite[Theorem 1]{sango}.
	
	\begin{thm}\label{pnptheo1}
		Let $(u_{\varepsilon})$ be a sequence in $L^{p}(\Gamma_{\varepsilon}\times\Lambda)$ such that 
		\begin{equation}\label{doeq3}
			\varepsilon \iint_{\Gamma_{\varepsilon}\times\Lambda} \left|u_{\varepsilon}(x,\omega)\right|^{p} d\sigma_{\varepsilon}(x) d\mu \leq C,
		\end{equation}
		where $C$ is a positive constant, independent of $\varepsilon$. Then, there exist a subsequence (still denoted by $\varepsilon$) and a stochastic two-scale limit $u_{0}(x,\omega,z) \in L^{p}\left(\Omega\times\Lambda; L^{p}_{per}(\Gamma)\right)$ such that $u_{\varepsilon}(x,\omega)$ stochastic two-scale converges to $u_{0}(x,\omega,z)$ in the sense that 
		\begin{equation}\label{doeq4}
			\begin{array}{l}
		\lim_{\varepsilon\to 0} \varepsilon \iint_{\Gamma_{\varepsilon}\times\Lambda} u_{\varepsilon}(x,\omega) \varphi\left(x, \mathcal{T}\left(\frac{x}{\varepsilon}\right), \frac{x}{\varepsilon^{2}}\right) d\sigma_{\varepsilon}(x) d\mu \\
		 = \int_{U}\iint_{\Gamma\times\Lambda} u_{0}(x,\omega,z) \varphi(x,\omega,z) dx d\sigma(z) d\mu,	
	\end{array}
		\end{equation}
		for any test function $\varphi(x,\omega,z) \in     \mathcal{C}^{\infty}_{0}(U)\times \mathcal{C}^{\infty}(\Lambda)\times\mathcal{C}_{per}(Z)$.
	\end{thm}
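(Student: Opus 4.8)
The plan is to prove Theorem \ref{pnptheo1} by duality, reproducing the architecture of the volume statement \cite{sango} but replacing the volume mean-value property by its surface analogue. For a test function $\varphi \in \mathcal{C}^{\infty}_{0}(U)\otimes\mathcal{C}^{\infty}(\Lambda)\otimes\mathcal{C}_{per}(Z)$ write $\varphi^{\varepsilon}(x,\omega) = \varphi\left(x, \mathcal{T}(\tfrac{x}{\varepsilon})\omega, \tfrac{x}{\varepsilon^{2}}\right)$ and define the family of linear functionals
\[
\ell_{\varepsilon}(\varphi) = \varepsilon \iint_{\Gamma_{\varepsilon}\times\Lambda} u_{\varepsilon}(x,\omega)\,\varphi^{\varepsilon}(x,\omega)\, d\sigma_{\varepsilon}(x)\, d\mu .
\]
The goal is to show that, along a subsequence, $\ell_{\varepsilon}(\varphi)$ converges for every such $\varphi$ to a bounded linear functional on a dense subspace of $L^{p'}(U\times\Lambda; L^{p'}_{per}(\Gamma))$, and then to represent that functional by an element $u_{0}$ of the dual space $L^{p}(U\times\Lambda; L^{p}_{per}(\Gamma))$; this representation is exactly (\ref{doeq4}).

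The crucial ingredient, and the step I expect to be the main obstacle, is the \emph{surface mean-value property}, i.e. the $\Gamma$-analogue of Proposition \ref{rem3}:
\[
\varepsilon \iint_{\Gamma_{\varepsilon}\times\Lambda} \left|\varphi^{\varepsilon}(x,\omega)\right|^{p'} d\sigma_{\varepsilon}(x)\, d\mu \;\longrightarrow\; \int_{U}\iint_{\Gamma\times\Lambda} \left|\varphi(x,\omega,z)\right|^{p'} dx\, d\sigma(z)\, d\mu \qquad (\varepsilon\to 0).
\]
I would establish this by the standard cell decomposition: cover $U$, up to a boundary layer of vanishing measure (admissible since $\varphi(\cdot,\omega,\cdot)\in\mathcal{C}^{\infty}_{0}(U)$), by the $\varepsilon$-cells $\varepsilon(k+Z)$, $k\in\mathbb{Z}^{N}$; on each cell the change of variables $x=\varepsilon(k+z)$ turns the enclosed piece of $\Gamma_{\varepsilon}$ into $\Gamma$ and scales the surface measure by $\varepsilon^{N-1}$, so that the prefactor $\varepsilon$ combines with $\varepsilon^{N-1}$ to give $\varepsilon^{N}$, the volume of one cell. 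The outcome is a Riemann sum in the slow variable $x$; freezing $x$ at the cell centres reduces the limit to the product of the periodic surface integral $\int_{\Gamma}|\cdot|^{p'}d\sigma(z)$ over the reference cell and the ergodic average over $\Lambda$ generated by $\mathcal{T}(\tfrac{x}{\varepsilon})$, which is precisely the mechanism (Birkhoff's theorem for $\mathcal{T}$) already used in \cite{sango} to prove Proposition \ref{rem3}. The delicate points are the control of the boundary cells and the justification of interchanging the slow Riemann sum with the fast periodic/stochastic averaging; by smoothness of $\varphi$ in $x$ and $\omega$ both errors are uniformly $o(1)$.

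Granting the surface mean-value property, the conclusion is routine. Hölder's inequality gives
\[
\left|\ell_{\varepsilon}(\varphi)\right| \leq \left(\varepsilon\iint_{\Gamma_{\varepsilon}\times\Lambda} |u_{\varepsilon}|^{p}\, d\sigma_{\varepsilon}\, d\mu\right)^{\frac{1}{p}} \left(\varepsilon\iint_{\Gamma_{\varepsilon}\times\Lambda} |\varphi^{\varepsilon}|^{p'}\, d\sigma_{\varepsilon}\, d\mu\right)^{\frac{1}{p'}},
\]
and combining (\ref{doeq3}) with the surface mean-value property yields $\limsup_{\varepsilon}|\ell_{\varepsilon}(\varphi)| \leq C^{1/p}\,\|\varphi\|_{L^{p'}(U\times\Lambda\times\Gamma)}$. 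Since $\mathcal{C}^{\infty}_{0}(U)\otimes\mathcal{C}^{\infty}(\Lambda)\otimes\mathcal{C}_{per}(Z)$ restricts to a dense subspace of the separable space $L^{p'}(U\times\Lambda; L^{p'}_{per}(\Gamma))$ (using density of $\mathcal{C}(\Gamma)$ in $L^{p'}(\Gamma)$ together with the tensor-product density already invoked for the volume case), a diagonal and Banach--Alaoglu extraction produces a subsequence along which $\ell_{\varepsilon}(\varphi)\to\ell(\varphi)$ for every test $\varphi$, with $\ell$ bounded by the same constant. For $1<p<\infty$ the predual is reflexive, so $\ell$ is represented by a unique $u_{0}\in L^{p}(U\times\Lambda; L^{p}_{per}(\Gamma))$, the $Z$-periodicity of $u_{0}$ in $z$ being inherited from the $Z$-periodicity of the factors $g(z)$ in the test functions. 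This identity is precisely (\ref{doeq4}), which completes the proof.
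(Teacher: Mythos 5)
Your top-level architecture is the same as the paper's: your functionals $\ell_{\varepsilon}$ are the paper's $\Psi_{\varepsilon}$, the H\"older estimate against (\ref{doeq3}) is identical, and your ``diagonal plus Banach--Alaoglu plus reflexive representation'' step is exactly what the paper packages as Proposition \ref{doprop1} (quoted from \cite{gabri}). The divergence, and the gap, is in your treatment of the surface mean-value property, the paper's Lemma \ref{dolem1}. A first, repairable, problem is your appeal to Birkhoff's theorem: the paper nowhere assumes $\mathcal{T}$ ergodic (this is why invariant functions and spaces such as $L^{p}_{nv}(\Lambda)$ appear throughout), and for a non-ergodic system orbit averages converge to the conditional expectation on the invariant $\sigma$-algebra, not to $\int_{\Lambda} g\, d\mu$; if your argument genuinely needed pointwise-in-$\omega$ orbit averaging it would fail. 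In fact no ergodic theorem is needed at all: since everything is integrated over $\Lambda$ against the invariant measure, Fubini plus invariance gives, for each fixed $x$, the \emph{exact} identity $\int_{\Lambda} g\bigl(\mathcal{T}\bigl(\frac{x}{\varepsilon}\bigr)\omega\bigr)\, d\mu = \int_{\Lambda} g\, d\mu$. This is precisely the paper's first step (reduction to tensor products $f\otimes g\otimes h$, then exact integration in $\omega$), and you should substitute it for the ergodic-average language.

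The second problem is substantive: your cell decomposition mishandles the two periodic scales. The surface $\Gamma_{\varepsilon}$ oscillates at scale $\varepsilon$, but the periodic test variable is $x/\varepsilon^{2}$. After your change of variables $x = \varepsilon(k+z)$, the factor $h(x/\varepsilon^{2})$ becomes $h\bigl((k+z)/\varepsilon\bigr)$, which still oscillates at scale $\varepsilon$ on the reference surface $\Gamma$; your sketch freezes it to $h(z)$, which is legitimate only when the oscillation is $x/\varepsilon$, matched to the surface. As written, your Riemann-sum argument proves the one-scale surface lemma of Allaire--Damlamian--Hornung, not the reiterated statement the theorem requires, and the residual limit $\int_{\Gamma} h(a + z/\varepsilon)\, d\sigma(z)$ is \emph{not} in general $\int_{\Gamma} h(z)\, d\sigma(z)$: expanding $h$ in Fourier series, on a curved piece of $\Gamma$ the nonzero modes decay by stationary phase and the limit is the cell volume average $\sigma(\Gamma)\int_{Z} h(y)\, dy$, while flat pieces with rational normals produce lattice resonances; in neither case does the surface average emerge automatically. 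So this step cannot be absorbed into ``both errors are uniformly $o(1)$''. The paper does not attempt this computation: after integrating out $\omega$ by invariance it disposes of the deterministic trace by citing \cite[Lemma 2.4]{allair3}, and it obtains the $p$-th power version for free from the Banach-algebra property of $\mathcal{K}\bigl(U;\mathcal{C}^{\infty}(\Lambda;\mathcal{C}_{per}(Z))\bigr)$ rather than redoing the argument for $|\varphi|^{p'}$ as you do. (Note the cited lemma is itself stated for the matched scale $x/\varepsilon$, so even the paper handles this point by citation rather than computation --- all the more reason your self-contained route must supply an actual argument for the $\varepsilon^{2}$-oscillation on an $\varepsilon$-periodic surface, which is exactly what your sketch elides.)
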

	The proof of this theorem is very similar to the usual case (see \cite[Theorem 1]{sango}). It relies on the following lemma and proposition. 
	\begin{lem}\label{dolem1}
		Let $a \in \mathcal{K}\left(U; \mathcal{C}^{\infty}\left(\Lambda; \mathcal{C}_{per}(Z)\right)\right)$. Then,  
		\begin{equation}\label{doeq5}
			\lim_{\varepsilon\to 0} \varepsilon \iint_{\Gamma_{\varepsilon}\times\Lambda} \left| a\left(x, \mathcal{T}\left(\frac{x}{\varepsilon}\right), \frac{x}{\varepsilon^{2}}\right)\right|^{p} d\sigma_{\varepsilon}(x) d\mu = \int_{U}\iint_{\Gamma\times\Lambda} \left| a(x,\omega,z)\right|^{p} dx d\sigma(z) d\mu.	
		\end{equation}
	\end{lem}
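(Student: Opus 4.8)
The plan is to combine two essentially independent reductions: first integrate out the probability variable $\omega$ using the invariance of $\mu$ under the dynamical system, which collapses the random microscale; then treat the resulting \emph{deterministic} quantity by a periodic surface-averaging (cell-unfolding) argument. Writing $b := |a|^{p}$, one first checks that $b \in \mathcal{K}\left(U; \mathcal{C}\left(\Lambda; \mathcal{C}_{per}(Z)\right)\right)$ and $b \geq 0$, so that every integral below is well defined and Fubini's theorem applies.

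\textbf{Step 1 (elimination of the random scale).} For a fixed $x$, both $\frac{x}{\varepsilon}$ and $\frac{x}{\varepsilon^{2}}$ are fixed, so $\omega \mapsto b\left(x, \mathcal{T}\left(\frac{x}{\varepsilon}\right)\omega, \frac{x}{\varepsilon^{2}}\right)$ is the composition of the fixed measure-preserving map $\mathcal{T}\left(\frac{x}{\varepsilon}\right)$ with $\omega \mapsto b\left(x,\omega,\frac{x}{\varepsilon^{2}}\right)$. Since each $\mathcal{T}(y)$ is $\mu$-preserving, integrating in $\omega$ removes the shift:
$$\int_{\Lambda} b\left(x, \mathcal{T}\left(\tfrac{x}{\varepsilon}\right)\omega, \tfrac{x}{\varepsilon^{2}}\right) d\mu = \int_{\Lambda} b\left(x, \omega, \tfrac{x}{\varepsilon^{2}}\right) d\mu =: \widetilde{b}\left(x, \tfrac{x}{\varepsilon^{2}}\right).$$
By Fubini the left-hand side of (\ref{doeq5}) then equals $\varepsilon \int_{\Gamma_{\varepsilon}} \widetilde{b}\left(x, \frac{x}{\varepsilon^{2}}\right) d\sigma_{\varepsilon}(x)$, where $\widetilde{b} \in \mathcal{K}(U; \mathcal{C}_{per}(Z))$ by continuity and dominated convergence. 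This is the genuinely stochastic step; note that because we integrate in $\omega$ we need only stationarity of $\mathcal{T}$, and not the full Birkhoff ergodic theorem.

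\textbf{Step 2 (periodic surface averaging).} It then remains to show that $\varepsilon \int_{\Gamma_{\varepsilon}} \widetilde{b}\left(x, \frac{x}{\varepsilon^{2}}\right) d\sigma_{\varepsilon}(x) \to \int_{U}\int_{\Gamma} \widetilde{b}(x,z)\, d\sigma(z)\, dx$, which is the surface analogue of the oscillation lemma underlying \cite[Theorem 1]{sango}. I would cover $U$ by the cells $\varepsilon(k+Z)$, $k \in \mathbb{Z}^{N}$; since $\widetilde{b}(\cdot,\cdot)$ has compact support in $U$, only finitely many interior cells contribute for $\varepsilon$ small, so no boundary-cell correction is needed. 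On each such cell $\Gamma_{\varepsilon}$ is the dilated translate $\varepsilon(k+\Gamma)$, and the change of variables $x = \varepsilon(k+z)$ sends $d\sigma_{\varepsilon}(x)$ to $\varepsilon^{N-1} d\sigma(z)$; combined with the prefactor $\varepsilon$ this yields the factor $\varepsilon^{N}$, the Lebesgue volume of one cell. Summing over $k$ exhibits the quantity as a Riemann sum for $\int_{U}(\,\cdot\,)\,dx$ whose integrand is the cell contribution $\int_{\Gamma} \widetilde{b}(\cdot,\cdot)\, d\sigma(z)$; uniform continuity of $\widetilde{b}$ in $x$ (guaranteed by compact support) lets me replace $x$ by the node $\varepsilon k$ with vanishing error, and periodicity resolves the fine oscillation on the reference surface $\Gamma$. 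A final linearity-and-density argument over products $\varphi_{1}(x)\varphi_{2}(\omega)\varphi_{3}(z)$ upgrades the identity to the general case.

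\textbf{Main obstacle.} The delicate point is the bookkeeping of the two microscales together with the surface-measure scaling: one must verify that the factor $\varepsilon$ in (\ref{doeq5}) exactly compensates the per-cell blow-up $\sim \varepsilon^{-(N-1)}$ of $\sigma_{\varepsilon}$, and, crucially, that the fine periodic oscillation integrates against the surface measure $d\sigma(z)$ on $\Gamma$ rather than against Lebesgue measure on $Z$. This geometric-measure accounting, made uniform over the $\sim \varepsilon^{-N}$ contributing cells, is where the estimates must be carried out with care; by contrast, the stochastic ingredient is disposed of cleanly in Step 1.
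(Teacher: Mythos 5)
Your Step 1 is correct and coincides with the paper's stochastic ingredient: Fubini plus the invariance of $\mu$ under the dynamical system eliminates the shift $\mathcal{T}\left(\frac{x}{\varepsilon}\right)$. Your observation that this applies directly to $b=|a|^{p}$ (continuity in $\omega$ suffices) is in fact tidier than the paper's route, which first proves the linear version (\ref{doeq6}) for tensor products $f(x)g(\omega)h(y)$, extends by density, and then passes to $|a|^{p}$ via a Banach-algebra remark.

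The genuine gap is in Step 2, precisely at the sentence ``periodicity resolves the fine oscillation on the reference surface $\Gamma$.'' The paper does not prove this deterministic limit by hand; it cites \cite[Lemma 2.4]{allair3}. Your unfolding argument is valid when the oscillation is at the \emph{same} scale as the surface: for $h\left(\frac{x}{\varepsilon}\right)$ on $\Gamma_{\varepsilon}=\left\{x :\ \frac{x}{\varepsilon}\in\Gamma\right\}$, the substitution $x=\varepsilon(k+z)$ gives $h(k+z)=h(z)$ by periodicity, and the oscillation is genuinely removed. Here, however, the test function oscillates at scale $\varepsilon^{2}$ while $\Gamma_{\varepsilon}$ oscillates at scale $\varepsilon$: after your change of variables the per-cell integrand is $\widetilde{b}\bigl(\varepsilon(k+z),(k+z)/\varepsilon\bigr)$, and the second argument, taken $\bmod\ Z$, still oscillates at rate $1/\varepsilon$ as $z$ runs over $\Gamma$; it is not tamed by periodicity and does not remain on $\Gamma$, since $z\mapsto (k+z)/\varepsilon \ \bmod\ Z$ does not map $\Gamma$ into itself. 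Consequently the per-cell contribution is not close to $\varepsilon^{N}\int_{\Gamma}\widetilde{b}(\varepsilon k,z)\,d\sigma(z)$, and your Riemann sum does not visibly converge to the right-hand side of (\ref{doeq5}). A concrete test: let $\Gamma$ contain a flat piece in $\{z_{1}=1/2\}$ and take $\varepsilon=1/n$; then on that piece the normal component of $(k+z)/\varepsilon \ \bmod\ 1$ equals $0$ for $n$ even and $1/2$ for $n$ odd, so the cell averages sample $\widetilde{b}$ on different slices of the torus along different subsequences — the mechanism you invoke is simply absent. This equidistribution of the $\varepsilon^{2}$-scale trace on the $\varepsilon$-scale surface is the real content of the step, and it is exactly what the paper outsources to its citation (indeed \cite[Lemma 2.4]{allair3} as stated concerns same-scale oscillation $h\left(\frac{x}{\varepsilon}\right)$, so even the paper's appeal needs an adaptation to the reiterated setting); to close your proof you must either invoke such a reiterated surface oscillation lemma or actually prove the equidistribution statement, which the cell bookkeeping you describe does not do.
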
 
	\begin{proof}
	The proof is done in two steps.\\
	\textbf{step 1} Firstly, we will prove that 
	\begin{equation}\label{doeq6}
		\lim_{\varepsilon\to 0} \varepsilon \iint_{\Gamma_{\varepsilon}\times\Lambda}  a\left(x, \mathcal{T}\left(\frac{x}{\varepsilon}\right), \frac{x}{\varepsilon^{2}}\right) d\sigma_{\varepsilon}(x) d\mu = \int_{U}\iint_{\Gamma\times\Lambda}  a(x,\omega,z) dx d\sigma(z) d\mu,	
	\end{equation}
	for all $a \in \mathcal{K}\left(U; \mathcal{C}^{\infty}\left(\Lambda; \mathcal{C}_{per}(Z)\right)\right)$. \\
	Since $ \mathcal{C}^{\infty}_{0}(U)\times \mathcal{C}^{\infty}(\Lambda)\times\mathcal{C}_{per}(Z)$ is dense in $\mathcal{K}\left(U; \mathcal{C}^{\infty}\left(\Lambda; \mathcal{C}_{per}(Z)\right)\right)$, we first check (\ref{doeq6}) for $ a \in \mathcal{C}^{\infty}_{0}(U)\times \mathcal{C}^{\infty}(\Lambda)\times\mathcal{C}_{per}(Z)$. For that, it is sufficient to do it for $a$ under the form $a(x,\omega,z) = f(x)g(\omega)h(y)$ with $f \in \mathcal{C}^{\infty}_{0}(U)$, $g\in \mathcal{C}^{\infty}(\Lambda)$ and $h\in \mathcal{C}_{per}(Z)$. But for such an $a$, we have 
	\begin{equation}\label{doeq7}
		\begin{array}{l}
		 \varepsilon \iint_{\Gamma_{\varepsilon}\times\Lambda}  a\left(x, \mathcal{T}\left(\frac{x}{\varepsilon}\right), \frac{x}{\varepsilon^{2}}\right) d\sigma_{\varepsilon}(x) d\mu \\
		  =  \varepsilon \int_{\Gamma_{\varepsilon}} \left(\int_{\Lambda} g\left( \mathcal{T}\left(\frac{x}{\varepsilon}\right)\omega \right) d\mu\right) f(x) h\left( \frac{x}{\varepsilon^{2}} \right) d\sigma_{\varepsilon}(x) 	\\
		   =  \varepsilon \int_{\Gamma_{\varepsilon}} \left(\int_{\Lambda} g(\omega) d\mu\right) f(x) h\left( \frac{x}{\varepsilon^{2}} \right) d\sigma_{\varepsilon}(x)  \\
		   =  \left(\int_{\Lambda} g(\omega) d\mu\right)\times \varepsilon \int_{\Gamma_{\varepsilon}} f(x) h\left( \frac{x}{\varepsilon^{2}} \right) d\sigma_{\varepsilon}(x),
		\end{array}
	\end{equation}
	where the second equality above is due to the Fubini's theorem and to the fact that the measure $\mu$ is invariant under the maps $\mathcal{T}(z)$. But, as $\varepsilon\to 0$, we thanks to \cite[Lemma 2.4]{allair3} and we have the following convergence result : 
	\begin{equation*}
	 \varepsilon \int_{\Gamma_{\varepsilon}} f(x) h\left( \frac{x}{\varepsilon^{2}} \right) d\sigma_{\varepsilon}(x) \rightarrow \iint_{U\times\Gamma} f(x) h(z) dx d\sigma(z) \quad \textup{as} \quad \varepsilon \to 0.	
	\end{equation*}
	Hence, passing to the limit in (\ref{doeq7}),  we get 
	\begin{equation*}
		\begin{array}{l}
		\lim_{\varepsilon\to 0}	\varepsilon \iint_{\Gamma_{\varepsilon}\times\Lambda}  a\left(x, \mathcal{T}\left(\frac{x}{\varepsilon}\right), \frac{x}{\varepsilon^{2}}\right) d\sigma_{\varepsilon}(x) d\mu  \\
		 =   
		 \left(\int_{\Lambda} g(\omega) d\mu\right)\times \iint_{U\times\Gamma} f(x) h(z) dx d\sigma(z) 	\\
			 =  \int_{U}\iint_{\Gamma\times\Lambda}  a(x,\omega,z) dx d\sigma(z) d\mu. 
		\end{array}
	\end{equation*}
	Now arguing as in the proof of \cite[Proposition 3]{sango}, we conclude with the standard density argument that (\ref{doeq6}) is holds for all $a \in \mathcal{K}\left(U; \mathcal{C}^{\infty}\left(\Lambda; \mathcal{C}_{per}(Z)\right)\right)$. 
	\textbf{Step 2.} Let now $1\leq p < \infty$. Since $\mathcal{K}\left(U; \mathcal{C}^{\infty}\left(\Lambda; \mathcal{C}_{per}(Z)\right)\right)$ is a Banach algebra then we have that, $|a|^{p} \in \mathcal{K}\left(U; \mathcal{C}^{\infty}\left(\Lambda; \mathcal{C}_{per}(Z)\right)\right)$ whenever $a \in \mathcal{K}\left(U; \mathcal{C}^{\infty}\left(\Lambda; \mathcal{C}_{per}(Z)\right)\right)$, so that, as $\varepsilon\to 0$, 
	  \begin{equation}
	  	 \varepsilon \iint_{\Gamma_{\varepsilon}\times\Lambda} \left| a\left(x, \mathcal{T}\left(\frac{x}{\varepsilon}\right), \frac{x}{\varepsilon^{2}}\right)\right|^{p} d\sigma_{\varepsilon}(x) d\mu = \int_{U}\iint_{\Gamma\times\Lambda} \left| a(x,\omega,z)\right|^{p} dx d\sigma(z) d\mu.	
	  \end{equation}
	\end{proof}
	
	\begin{prop}\cite[Proposition 3.2]{gabri}\label{doprop1} \\
		Let $F$ be a subspace (not necessarily closed) of a reflexive Banach space $G$ and let $f_{n} : F \rightarrow \mathbb{C}$ be a sequence of linear functionals (not necessarily continuous).  Assume there exists a constant $C > 0$ such that 
		\begin{equation*}
			\limsup_{n\to \infty} |f_{n}(x)| \leq C \, \|x\| \quad \textup{for \,\, all} \,\, x \in F,
		\end{equation*}
		where $\|\cdot\|$ denotes the norm in $G$. Then there exists a subsequence $(f_{n_{k}})_{k}$ of $(f_{n})$ and a functional $f \in G'$ such that 
		\begin{equation*}
			\lim_{k} f_{n_{k}}(x) = f(x) \quad \textup{for \,\, all} \,\, x \in F,
		\end{equation*} 
	\end{prop}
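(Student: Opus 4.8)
The plan is to reduce the statement to a sequential weak-$*$ compactness argument in the dual $G'$, which is available precisely because $G$ is reflexive. The starting point is the only consequence one can extract from the hypothesis: for each fixed $x\in F$ the bound $\limsup_n|f_n(x)|\le C\|x\|<\infty$ forces the scalar sequence $(f_n(x))_n$ to be bounded, say $\sup_n|f_n(x)|=:K(x)<\infty$. This pointwise boundedness, rather than any uniform operator bound, is the whole working hypothesis, and the difficulty throughout is that the $f_n$ are neither continuous nor equibounded as functionals on $F$, so no Banach--Steinhaus reasoning is directly available.

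First I would construct the candidate limit together with the subsequence. I would choose a countable family $\{x_j\}_{j\ge 1}$ total in the closed subspace $\overline{F}$ of $G$; this is where separability is used, which is legitimate in our setting since $L^{p}(\Lambda)$ is assumed separable and the relevant test space is separable. By the pointwise boundedness above and a Cantor diagonal extraction over $j$, I obtain a subsequence $(f_{n_k})_k$ along which $\ell_j:=\lim_k f_{n_k}(x_j)$ exists for every $j$. Passing to the limit in the hypothesis gives $|\ell_j|\le\limsup_k|f_{n_k}(x_j)|\le C\|x_j\|$, so the linear map $x_j\mapsto\ell_j$ on $\operatorname{span}\{x_j\}$ is bounded with norm at most $C$; extending it by density to $\overline{F}$ and then by Hahn--Banach to all of $G$ produces $f\in G'$ with $\|f\|_{G'}\le C$.

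The remaining step, and the step I expect to be the main obstacle, is to promote the convergence $f_{n_k}(x_j)\to f(x_j)$ from the total family to \emph{every} $x\in F$. Since the $f_{n_k}$ are not equicontinuous, density alone does not transfer convergence, so the $\limsup$ bound must be exploited a second time: for arbitrary $x\in F$ one approximates $x$ in $\overline{F}$ by $y\in\operatorname{span}\{x_j\}$ and estimates the residual through $\limsup_k|f_{n_k}(x-y)|\le C\|x-y\|$, combined with the already fixed limits on $\{x_j\}$. Here reflexivity of $G$ is decisive and not merely cosmetic: it guarantees, via the weak-$*$ sequential compactness of norm-bounded subsets of $G'$ (Eberlein--\v Smulian), that the extraction can be arranged to be genuinely \emph{sequential} and to converge weak-$*$ to $f$, so that $f_{n_k}(x)\to f(x)$ holds for all $x\in G$, in particular for all $x\in F$, rather than only along a subnet.

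The delicate point is therefore the interplay between the $\limsup$ hypothesis (in place of a uniform bound) and the non-continuity of the $f_n$: one must check that the diagonal/generalized limit defining $f$ genuinely inherits the norm bound $C$, and that the final weak-$*$ extraction remains compatible with the pointwise values already frozen on $\{x_j\}$. Since Proposition~\ref{doprop1} is quoted from \cite{gabri}, I would present this argument as a self-contained verification and refer to \cite[Proposition 3.2]{gabri} for the full technical details.
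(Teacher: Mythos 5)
The paper never proves Proposition~\ref{doprop1}: it is quoted verbatim from \cite[Proposition 3.2]{gabri}, so there is no in-paper argument to compare yours against, and your proof must stand on its own. Its core does stand: pick a countable dense family in $F$ (take the $x_j$ in $F$ itself, not merely total in $\overline{F}$ --- both because $f_n$ is defined only on $F$ and because the residual $x-y$ must lie in $F$ for the hypothesis to apply; this is a trivial fix since $F$ is dense in $\overline{F}$), extract by Cantor's diagonal, observe $|\lim_k f_{n_k}(y)|\le\limsup_k|f_{n_k}(y)|\le C\|y\|$ for $y\in\mathrm{span}\{x_j\}$, extend by density and Hahn--Banach to get $f\in G'$ with $\|f\|\le C$, and finally, for arbitrary $x\in F$ and $y\in\mathrm{span}\{x_j\}$, estimate $\limsup_k|f_{n_k}(x)-f(x)|\le\limsup_k|f_{n_k}(x-y)|+0+|f(y)-f(x)|\le 2C\|x-y\|$. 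This last step, where the $\limsup$ hypothesis is used a second time in place of the missing equicontinuity, already yields $f_{n_k}(x)\to f(x)$ for \emph{every} $x\in F$ along the \emph{same} subsequence. At that point the proof is complete, in the separable setting, and you correctly identified this $\varepsilon/3$ step as the crux.

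Two criticisms. First, your closing appeal to Eberlein--\v{S}mulian is wrong and should be deleted: the $f_{n_k}$ are not elements of $G'$ --- they are neither continuous nor uniformly norm-bounded (only a pointwise $\limsup$ bound is assumed; each $f_n$ may be unbounded) --- so they cannot ``converge weak-$*$ in $G'$'', and the asserted conclusion that $f_{n_k}(x)\to f(x)$ ``for all $x\in G$'' is meaningless, since $f_{n_k}$ is defined only on $F$. Fortunately that paragraph is redundant: your $\varepsilon/3$ estimate needs no compactness, and in fact reflexivity plays no role anywhere in your argument (Hahn--Banach works in any Banach space), contrary to your claim that it is ``decisive''; reflexivity earns its keep in the general statement of \cite{gabri}, where no separability is assumed (for instance, for an equibounded sequence in $G'$ it gives weak sequential compactness in any reflexive $G$, separable or not). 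This leads to the second point: by importing separability you prove a strictly weaker proposition than the one quoted. That restriction is harmless for this paper --- $L^{p}(\Lambda)$ is explicitly assumed separable and the application is $p=2$ with $G=L^{p'}(U\times\Lambda;L^{p'}_{per}(\Gamma))$ --- but it should be stated as a hypothesis, with the citation to \cite[Proposition 3.2]{gabri} retained for the general case, as you yourself suggest at the end.
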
 
	
	\begin{proof}(of Theorem \ref{pnptheo1})  \\
	Let $G = L^{p'}\left(U\times\Lambda; L^{p'}_{per}(\Gamma)\right)$, and $F = \mathcal{C}^{\infty}_{0}(U)\times \mathcal{C}^{\infty}(\Lambda)\times\mathcal{C}_{per}(Z)$. Let us define the mapping $\Psi_{\varepsilon}$ by 
	\begin{equation*}
		\Psi_{\varepsilon}(a) = \varepsilon \iint_{\Gamma_{\varepsilon}\times\Lambda} u_{\varepsilon} a^{\varepsilon}  d\sigma_{\varepsilon}(x) d\mu \quad a \in \mathcal{C}^{\infty}_{0}(U)\times \mathcal{C}^{\infty}(\Lambda)\times\mathcal{C}_{per}(Z) 
	\end{equation*}
	where $a^{\varepsilon}(x,\omega) = a\left(x, \mathcal{T}\left(\frac{x}{\varepsilon}\right), \frac{x}{\varepsilon^{2}}\right)$ for $(x,\omega) \in \Gamma_{\varepsilon}\times\Lambda$. Then by Holder's inequality and using (\ref{doeq3}), we have
	\begin{equation*}
		 \left|\Psi_{\varepsilon}(a) \right| \leq C^{1/p} \left[\varepsilon \iint_{\Gamma_{\varepsilon}\times\Lambda} |a^{\varepsilon}(x,\omega)|^{p'} d\sigma_{\varepsilon}(x) d\mu \right]^{1/p'}\quad \textup{for\, all}\; a \in F,
	\end{equation*}
	and thus, by Lemma \ref{dolem1}, one has 
	\begin{equation*}
		\begin{array}{rcl}	
	\limsup_{\varepsilon\to 0}	\left|\Psi_{\varepsilon}(a) \right| & \leq & C^{1/p} \left[\int_{U}\iint_{\Gamma\times\Lambda} \left| a(x,\omega,z)\right|^{p'} dx d\sigma(z) d\mu  \right]^{1/p'}  \\ 
	& \leq &  C^{1/p} \left\|a \right\|_{L^{p'}(U\times\Lambda\times\Gamma)}. 
\end{array}
	\end{equation*}
	Hence, we deduce from the Proposition \ref{doprop1} the existence of a subsequence  $E'$ of $E$ and of a unique $u_{0} \in G'= L^{p}\left(U\times\Lambda; L^{p}_{per}(\Gamma)\right)$ such that 
	\begin{equation*}
\lim_{\varepsilon\to 0}	\varepsilon \iint_{\Gamma_{\varepsilon}\times\Lambda} u_{\varepsilon} a^{\varepsilon}  d\sigma_{\varepsilon}(x) d\mu = \int_{U}\iint_{\Gamma\times\Lambda} u_{0}(x,\omega,z) a(x,\omega,z) dx d\sigma(z) d\mu,	
\end{equation*}
for any  $a \in     \mathcal{C}^{\infty}_{0}(U)\times \mathcal{C}^{\infty}(\Lambda)\times\mathcal{C}_{per}(Z)$.	
	\end{proof}
	
	We have recalled all the tools to study the stochastic-periodic homogenization of Poisson-Nernst-Plank equations (\ref{c2tc10})-(\ref{c2tc12}) in porous media.
	

	\section{Global homogenized problem  s-PNP}\label{sect3}
	
	The aim of this section is to prove the following Theorem.	
	\begin{thm}\label{nptheo1}	
	Let $(\varpi_{\pm,\varepsilon}, \varUpsilon_{\varepsilon})$ be the unique solution of weak problem (\ref{c2tc10})-(\ref{c2tc12})  such that $(\varpi_{\pm,\varepsilon}) = \left(\varpi_{+,\varepsilon}, \varpi_{-,\varepsilon}\right) \in L^{\infty}\left([0,T];L^{2}(\Lambda; H^{1}(\Omega_{\varepsilon}^{f}))\right)^{2} \cap L^{2}\left(\Lambda;H^{1}(Q_{\varepsilon}^{f})\right)^{2}$ and $\varUpsilon_{\varepsilon} \in \mathcal{C}^{0}\left([0,T]; L^{2}(\Lambda; H^{1}(\Omega))\right)$. Then for all $t>0$, we have as $\varepsilon\to 0$ :
	
	\begin{equation}\label{c4tc1}
		\varpi_{\pm,\varepsilon} \rightarrow \varpi_{\pm,0} \;\, stoch.\, in \; L^{2}(Q\times\Lambda)-weak\, 2s
	\end{equation}
	
	\begin{equation}\label{c4tc2}
		\nabla\varpi_{\pm,\varepsilon} \rightarrow \nabla_{x}\varpi_{\pm,0} + \overline{D}_{\omega}\varpi_{\pm,1} + \nabla_{y}\varpi_{\pm,2} \;\, stoch.\, in \; L^{2}(Q\times\Lambda)^{N}-weak\, 2s
	\end{equation}
	
	\begin{equation}\label{c4tc3}
		\varUpsilon_{\varepsilon}(t) \rightarrow \varUpsilon_{0}(t) \;\, stoch.\, in \; L^{2}(\Omega\times\Lambda)-weak\, 2s
	\end{equation}
	
	\begin{equation}\label{c4tc4}
		\nabla\varUpsilon_{\varepsilon}(t) \rightarrow \nabla_{x}\varUpsilon_{0}(t) + \overline{D}_{\omega}\varUpsilon_{1}(t) + \nabla_{y}\varUpsilon_{2}(t) \;\, stoch.\, in \; L^{2}(\Omega\times\Lambda)^{N}-weak\, 2s
	\end{equation}
	
	\begin{equation}\label{c4tc}
	\varpi_{\pm,0}(0,\cdot,\cdot) = \varpi^{0}_{\pm,0} \quad \textup{in} \; \Omega\times\Lambda.	
	\end{equation}
where : 
\begin{itemize}
	\item $(\varpi_{\pm,0},\varUpsilon_{0}(t)) \in H^{1}(Q; L^{2}_{nv}(\Lambda))\times H^{1}(\Omega; L^{2}_{nv}(\Lambda))$,
	\item $(\varpi_{\pm,1},\varUpsilon_{1}(t)) \in L^{2}(Q ; H^{1}_{\#}(\Lambda))\times L^{2}(\Omega ; H^{1}_{\#}(\Lambda))$ and 
	\item $(\varpi_{\pm,2},\varUpsilon_{2}(t)) \in L^{2}(Q\times\Lambda ; H^{1}_{\#}(Y))\times L^{2}(\Omega\times\Lambda ; H^{1}_{\#}(Y))$
\end{itemize}
    are the unique solutions to the variational system\footnote{It is referred to as the \textit{global homogenized problem} for (\ref{c1tc1})-(\ref{c1tc3})} (\ref{c4tc5})-(\ref{c4tc6}):
	\begin{equation}\label{c4tc5}
	\begin{array}{l}
		-\int_{Q\times\Lambda\times Y} \chi_{Y_{f}}(y) \varpi_{\pm,0} \dfrac{\partial \phi_{0} }{\partial t}  dxdtd\mu dy \\
		
		\\
		\quad + D_{\pm} \int_{Q\times\Lambda\times Y} \chi_{Y_{f}}(y) \left(\nabla_{x}\varpi_{\pm,0} + \overline{D}_{\omega}\varpi_{\pm,1} + \nabla_{y}\varpi_{\pm,2}\right)  \\
		 \qquad \qquad \quad \qquad \quad \times \left(\nabla_{x}\phi_{0} + \overline{D}_{\omega}\phi_{1} + \nabla_{y}\phi_{2}\right) dxdtd\mu dy \\
		
		\\
		\quad + D_{\pm}\dfrac{F}{R\varTheta} \int_{Q\times\Lambda\times Y} \chi_{Y_{f}}(y) z_{\pm}\varpi_{\pm,0}  \left(\nabla_{x}\varUpsilon_{0} + \overline{D}_{\omega}\varUpsilon_{1} + \nabla_{y}\varUpsilon_{2}\right)   \\
		\qquad \qquad \quad \qquad \quad \qquad \times\left(\nabla_{x}\phi_{0} + \overline{D}_{\omega}\phi_{1} + \nabla_{y}\phi_{2}\right) dxdtd\mu dy = 0,
	\end{array}
\end{equation}
\vspace{1cm}
\begin{equation}\label{c4tc6}
	\begin{array}{l}
		\int_{\Omega\times\Lambda}\int_{Y} \vartheta\left(\omega,y\right) \left(\nabla_{x}\varUpsilon_{0} + \overline{D}_{\omega}\varUpsilon_{1} + \nabla_{y}\varUpsilon_{2}\right)\left(\nabla_{x}\psi_{0} + \overline{D}_{\omega}\psi_{1} + \nabla_{y}\psi_{2}\right) dxd\mu dy  \\
		
		\\
		\quad	+  \int_{\Omega}\int_{\Gamma^{sf}\times\Lambda} \eta\left(\omega,y\right) \gamma\left(\varUpsilon_{0}(t)\right) \psi_{0} dxdS(y)d\mu  \\
		
		\\
		\quad	=
		\int_{\Omega\times\Lambda}\int_{Y} \chi_{Y_{f}}(y)	F \left(z_{+}\varpi_{+,0}(t) - z_{-}\varpi_{-,0}(t)\right)\psi_{0} dxd\mu dy, 
	\end{array}
\end{equation}
\vspace{0.2cm}
where $\phi_{0} \in \mathcal{D}(]0,T[)\times\mathcal{C}_{0}^{\infty}(Q)\times L_{nv}^{2}(\Lambda)$, $\phi_{1} \in \mathcal{D}(]0,T[)\times\mathcal{C}_{0}^{\infty}(Q)\times \mathcal{C}^{\infty}(\Lambda)$, $\phi_{2} \in \mathcal{D}(]0,T[)\times\mathcal{C}_{0}^{\infty}(Q)\times \mathcal{C}^{\infty}(\Lambda)\times \mathcal{D}_{\#}(Y)$
 and $\psi_{0} \in \mathcal{C}_{0}^{\infty}(\Omega)\times L_{nv}^{2}(\Lambda)$, $\psi_{1} \in \mathcal{C}_{0}^{\infty}(\Omega)\times \mathcal{C}^{\infty}(\Lambda)$, $\psi_{2} \in \mathcal{C}_{0}^{\infty}(\Omega)\times \mathcal{C}^{\infty}(\Lambda)\times \mathcal{D}_{\#}(Y)$.  
	\end{thm}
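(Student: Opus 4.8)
The plan is to establish the convergences \eqref{c4tc1}--\eqref{c4tc4} by first deriving uniform a priori bounds and then extracting stochastic two-scale limits, before identifying those limits as the unique solution of the global homogenized system \eqref{c4tc5}--\eqref{c4tc6}. First I would test the weak formulation \eqref{c2tc10}--\eqref{c2tc11} against the solutions themselves (taking $\phi = \varpi_{\pm,\varepsilon}$ and $\psi = \varUpsilon_{\varepsilon}$) and exploit the coercivity coming from the ellipticity lower bounds on $\vartheta$, the positivity $\gamma'(r)\ge\alpha$ of the Grahame nonlinearity, and the $L^{\infty}$ bounds \eqref{c2tc13} together with the mass-conservation identity \eqref{c2tc14}. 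The goal is uniform-in-$\varepsilon$ estimates of $\varpi_{\pm,\varepsilon}$ and $\varUpsilon_{\varepsilon}$ in $L^{2}(\Lambda;H^{1})$, of $\partial_{t}\varpi_{\pm,\varepsilon}$, and crucially a bound of the form $\varepsilon\iint_{\Gamma^{sf}_{\varepsilon}\times\Lambda}|\varUpsilon_{\varepsilon}|^{2}\,d\sigma_{\varepsilon}\,d\mu\le C$ on the periodic surface (obtained via a scaled trace inequality on $\Gamma^{sf}_{\varepsilon}$), so that Theorem \ref{pnptheo1} applies to the boundary term.

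Next I would pass to the limit. Proposition \ref{rem12} applied to $\varpi_{\pm,\varepsilon}$ and to $\varUpsilon_{\varepsilon}$ yields, along a common subsequence, the two-scale limits $\varpi_{\pm,0},\varpi_{\pm,1},\varpi_{\pm,2}$ and $\varUpsilon_{0},\varUpsilon_{1},\varUpsilon_{2}$ in the stated spaces together with the gradient decompositions \eqref{c4tc2} and \eqref{c4tc4}; Proposition \ref{rem1} handles the remaining bounded sequences. I would then insert oscillating test functions of the form $\phi^{\varepsilon}(t,x,\omega)=\phi_{0}(t,x,\omega)+\varepsilon\,\phi_{1}\!\left(t,x,\mathcal{T}(x/\varepsilon)\omega\right)+\varepsilon\,\phi_{2}\!\left(t,x,\mathcal{T}(x/\varepsilon)\omega,x/\varepsilon^{2}\right)$ (and analogously $\psi^{\varepsilon}$) into \eqref{c2tc10}--\eqref{c2tc11}. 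The factor $\chi_{Y_{f}}(y)$ enters through the admissibility convergence $\int_{\Omega\times\Lambda}|a^{\varepsilon}|\to\iint a$ recalled before Proposition \ref{rem3}, which also controls the integrals restricted to $\Omega^{f}_{\varepsilon}$. For the linear diffusion and Poisson terms one combines the weak two-scale convergence of the gradients \eqref{c4tc2}, \eqref{c4tc4} with the strong two-scale convergence of the smooth oscillating coefficients; the boundary integral $\varepsilon\int_{\Gamma^{sf}_{\varepsilon}}\eta^{\varepsilon}\gamma(\varUpsilon_{\varepsilon})\psi^{\varepsilon}$ is passed to the limit using Theorem \ref{pnptheo1}, which produces exactly the surface term in \eqref{c4tc6}.

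The hard part will be the nonlinear drift term $\int_{\Omega^{f}_{\varepsilon}}z_{\pm}\varpi_{\pm,\varepsilon}\nabla\varUpsilon_{\varepsilon}\cdot\nabla\phi^{\varepsilon}$, which is a product of two merely weakly converging sequences, and the similarly nonlinear $\gamma(\varUpsilon_{\varepsilon})$ on the surface. Here I would invoke the compact injection of $H^{1}(Q)$ into $L^{2}(Q)$ noted in the geometry subsection, which upgrades $\varpi_{\pm,\varepsilon}$ to \emph{strong} convergence in $L^{2}(Q\times\Lambda)$ and hence, by Proposition \ref{c2eq1}, to strong stochastic two-scale convergence; its product with the weakly two-scale convergent $\nabla\varUpsilon_{\varepsilon}$ then converges to $\varpi_{\pm,0}(\nabla_{x}\varUpsilon_{0}+\overline{D}_{\omega}\varUpsilon_{1}+\nabla_{y}\varUpsilon_{2})$ by Proposition \ref{c2eq2} (or Corollary \ref{c2eq3}, using the $L^{\infty}$ bound \eqref{c2tc13}). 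The continuity and Lipschitz property of $\gamma$ let me pass to the limit in $\gamma(\varUpsilon_{\varepsilon})\to\gamma(\varUpsilon_{0})$ on the surface, giving the term in \eqref{c4tc6}. Collecting all limits yields \eqref{c4tc5}--\eqref{c4tc6} for the dense classes of test functions listed, and \eqref{c4tc} follows from passing to the limit in the initial condition \eqref{c2tc12}.

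Finally I would argue uniqueness of the limit system. Since \eqref{c4tc5}--\eqref{c4tc6} is linear in the fluctuations $(\varpi_{\pm,1},\varpi_{\pm,2},\varUpsilon_{1},\varUpsilon_{2})$ and the macroscopic equations are coercive with the monotone boundary contribution $\gamma'\ge\alpha$, a standard energy argument on the difference of two solutions forces uniqueness; this in turn shows the whole sequence (not merely a subsequence) converges, completing the proof.
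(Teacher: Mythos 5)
Your overall strategy coincides with the paper's: uniform a priori bounds, two-scale compactness via Proposition \ref{rem12}, oscillating test functions in \eqref{c2tc10}--\eqref{c2tc11}, the strong-times-weak product rule (Propositions \ref{c2eq1}, \ref{c2eq2} and Corollary \ref{c2eq3}) for the drift term, Theorem \ref{pnptheo1} for the surface term, and uniqueness of the limit system to upgrade subsequential to whole-sequence convergence. However, there are two concrete defects. First, you never extend the concentrations off the perforated domain. The functions $\varpi_{\pm,\varepsilon}$ live in $H^{1}(Q_{\varepsilon}^{f})$, a space over an $\varepsilon$-dependent domain, so neither the notion of stochastic two-scale convergence in $L^{2}(Q\times\Lambda)$ nor Proposition \ref{rem12} (which requires a sequence bounded in $L^{p}(\Lambda;W^{1,p}(\Omega))$ over a \emph{fixed} $\Omega$) applies to them as they stand. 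The paper introduces equi-bounded extension operators $\mathcal{P}_{\varepsilon}:L^{2}(\Lambda;H^{1}(Q_{\varepsilon}^{f}))\to L^{2}(\Lambda;H^{1}(Q))$, works throughout with $\widetilde{\varpi_{\pm,\varepsilon}}=\mathcal{P}_{\varepsilon}(\varpi_{\pm,\varepsilon})$, and rewrites \eqref{c2tc10} as an integral over $Q\times\Lambda$ against $\chi_{\Omega_{\varepsilon}^{f}}$; it is the two-scale limit of this characteristic function that produces the factor $\chi_{Y_{f}}(y)$ in \eqref{c4tc5}--\eqref{c4tc6}, not merely the admissibility convergence you invoke. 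Without the extension step your plan does not typecheck, and your energy estimates obtained by testing with the solutions would in any case only control norms on $Q_{\varepsilon}^{f}$.

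Second, your corrector scaling is wrong: you take $\phi^{\varepsilon}=\phi_{0}+\varepsilon\phi_{1}\left(t,x,\mathcal{T}\left(\frac{x}{\varepsilon}\right)\omega\right)+\varepsilon\,\phi_{2}\left(t,x,\mathcal{T}\left(\frac{x}{\varepsilon}\right)\omega,\frac{x}{\varepsilon^{2}}\right)$, but since $\phi_{2}$ oscillates at scale $x/\varepsilon^{2}$, the gradient of the last term is $\varepsilon\nabla_{x}\phi_{2}+\overline{D}_{\omega}\phi_{2}+\varepsilon^{-1}\nabla_{y}\phi_{2}$, whose $y$-part blows up as $\varepsilon\to 0$; the limit passage fails as written. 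The correct ansatz, used in the paper, is $\varepsilon^{2}\phi_{2}$, so that $\nabla\left(\varepsilon^{2}\phi_{2}\right)=\varepsilon^{2}\nabla_{x}\phi_{2}+\varepsilon\overline{D}_{\omega}\phi_{2}+\nabla_{y}\phi_{2}$ and precisely the $\nabla_{y}$-component survives at order one, which is what identifies $\varpi_{\pm,2}$ and $\varUpsilon_{2}$. A further, more minor, gloss: since the potential estimates are uniform in $t$ but pointwise in time, the extraction for $\varUpsilon_{\varepsilon}(t)$ yields a subsequence $\varepsilon(t)$ a priori depending on $t$; the paper acknowledges this and removes the dependence only a posteriori through uniqueness of the limit system, whereas your claim of a single ``common subsequence'' for concentrations and potential at every $t$ needs either that argument or a diagonal extraction to be legitimate. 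With these repairs (extension operators, $\varepsilon^{2}$-scaling of $\phi_{2}$ and $\psi_{2}$, and the $t$-dependent subsequence handled via uniqueness), your proof matches the paper's, which moreover imports the a priori estimates \eqref{c4tc7}--\eqref{c4tc8} from the deterministic work of Gagneux and Millet rather than re-deriving them by energy testing as you propose.
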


\begin{proof}
	\textbf{i) \textit{A priori} uniform estimates} \\
	 Arguing as in \cite[Page 78]{gagneu1}, let us introduce the extension operators $\mathcal{P}_{\varepsilon}$ in $Q$ outside of $Q_{\varepsilon}^{f}$, equi-bounded of $L^{2}(\Lambda;H^{1}(Q_{\varepsilon}^{f}))$ in $L^{2}(\Lambda;H^{1}(Q))$. We then extend the functions $\varpi_{\pm,\varepsilon}$ into $\widetilde{\varpi_{\pm,\varepsilon}} = \mathcal{P}_{\varepsilon}(\varpi_{\pm,\varepsilon})$ and we have \textit{a priori} estimates, for $\mu$-a.e. $\omega\in\Lambda$ : 
	\begin{equation}\label{c4tc7}
	\|\widetilde{\varpi_{\pm,\varepsilon}}(\cdot,\cdot,\omega)\|_{H^{1}(Q)\cap L^{\infty}([0,T];H^{1}(\Omega))} \leq C,	
	\end{equation}
	\begin{equation}\label{c4tc8}
		\begin{array}{l}
	\|\gamma\left(\varUpsilon_{\varepsilon}(\cdot,\cdot,\omega)\right)\|_{\mathcal{C}^{0}([0,T];H^{1}(\Omega))} \\
	\qquad + \max_{t\in[0,T]} \varepsilon\int_{\Gamma^{sf}_{\varepsilon}}	 \left|\eta\left(\mathcal{T}\left(\frac{\cdot}{\varepsilon}\right)\omega,\frac{\cdot}{\varepsilon^{2}}\right) \gamma\left(\varUpsilon_{\varepsilon}(t,\cdot,\omega)\right)\right|^{2} dS_{\varepsilon} \leq C,
\end{array}
	\end{equation}
	for some constant $C$ independent of $\varepsilon$. \\
	\textbf{ii) stochastic two-scale convergence of concentrations}  \\
	 Using uniform bounded (\ref{c4tc7}) and Theorem \ref{rem12}, from the sequence $\widetilde{\varpi_{\pm,\varepsilon}}$ we can extract a subsequence, still noted $\widetilde{\varpi_{\pm,\varepsilon}}$ such that\footnote{Note that in Theorem \ref{nptheo1} for the ease of presentation we considered the solution $\varpi_{\pm,\varepsilon}$ instead of its extension $\widetilde{\varpi_{\pm,\varepsilon}}$ }, as $\varepsilon\to 0$ : 
	 \begin{equation*}
	 	\widetilde{\varpi_{\pm,\varepsilon}} \rightarrow \varpi_{\pm,0} \;\, stoch.\, in \; L^{2}(Q\times\Lambda)-weak\, 2s,
	 \end{equation*}
	 \begin{equation*}
	 	\nabla\widetilde{\varpi_{\pm,\varepsilon}} \rightarrow \nabla_{x}\varpi_{\pm,0} + \overline{D}_{\omega}\varpi_{\pm,1} + \nabla_{y}\varpi_{\pm,2} \;\, stoch.\, in \; L^{2}(Q\times\Lambda)^{N}-weak\, 2s,
	 \end{equation*}
	 with $\varpi_{\pm,0} \in H^{1}(Q; L^{2}_{nv}(\Lambda))$, $\varpi_{\pm,1} \in L^{2}(Q ; H^{1}_{\#}(\Lambda))$, $\varpi_{\pm,2} \in L^{2}(Q\times\Lambda ; H^{1}_{\#}(Y))$. Moreover $\widetilde{\varpi_{\pm,\varepsilon}}$ converges to $\varpi_{\pm,0}$ as follows : 
	 \begin{itemize}
	 	\item weakly in $L^{2}(\Lambda; H^{1}(Q))^{2}$,
	 	\item weakly-$\ast$ in $L^{\infty}(0,T ; L^{2}(\Lambda; H^{1}(Q)))^{2}$,
	 	\item  strongly in $L^{2}(\Lambda; L^{2}(Q))^{2}$ and $\mathcal{C}^{0}([0,T]; L^{2}(\Lambda; L^{2}(Q)))^{2}$,
	 	\item $\mu\times\mathcal{L}^{N+1}$-a.e. in $\Lambda\times Q$.
	 \end{itemize}
	 \textbf{iii) stochastic two-scale convergence of potential}  \\
	 Using uniform bounded (\ref{c4tc8}) for $t>0$ fixed, the expression $\varUpsilon_{\varepsilon}(t)$ has a sense and defines an element of $L^{2}(\Lambda; H^{1}(\Omega))$ uniformly bounded with respect to $t$. Thanks to Theorem \ref{rem12}, we can extract a subsequence $\varUpsilon_{\varepsilon(t)}$ \textit{a priori} dependent of $t$, such that as $\varepsilon(t)\to 0$ : 
	 \begin{equation*}
	 	\varUpsilon_{\varepsilon(t)} \rightarrow \varUpsilon_{0}(t) \;\, stoch.\, in \; L^{2}(\Omega\times\Lambda)-weak\, 2s,
	 \end{equation*}
	 \begin{equation*}
	 	\nabla\varUpsilon_{\varepsilon(t)} \rightarrow \nabla_{x}\varUpsilon_{0}(t) + \overline{D}_{\omega}\varUpsilon_{1}(t) + \nabla_{y}\varUpsilon_{2}(t) \;\, stoch.\, in \; L^{2}(\Omega\times\Lambda)^{N}-weak\, 2s,
	 \end{equation*}
	 with  $\varUpsilon_{0}(t) \in  H^{1}(\Omega; L^{2}_{nv}(\Lambda))$, $\varUpsilon_{1}(t) \in  L^{2}(\Omega ; H^{1}_{\#}(\Lambda))$ and $\varUpsilon_{2}(t) \in  L^{2}(\Omega\times\Lambda ; H^{1}_{\#}(Y))$.  Moreover $\varUpsilon_{\varepsilon(t)}$ converges to $\varUpsilon_{0}(t)$ as follows : 
	 \begin{itemize}
	 	\item weakly in $L^{2}(\Lambda; H^{1}(\Omega))$,
	 	\item  strongly in $L^{2}(\Lambda; L^{2}(\Omega))$,
	 \end{itemize}
	 and since $\gamma$ is a Lipschitz function, $\gamma(\varUpsilon_{\varepsilon(t)})$ converges to $\gamma(\varUpsilon_{0}(t))$ weakly in $L^{2}(\Lambda; H^{1}(\Omega))$ and strongly in $L^{2}(\Lambda; L^{2}(\Omega))$. To end, using Theorem \ref{pnptheo1} and uniform bounded (\ref{c4tc8}), we have that for $t>0$ fixed, $\eta\left(\mathcal{T}\left(\frac{x}{\varepsilon(t)}\right)\omega,\frac{x}{\varepsilon^{2}(t)}\right) \gamma\left(\varUpsilon_{\varepsilon(t)}(t)\right)_{|\Gamma_{\varepsilon(t)}^{sf}\times\Lambda}$ stochastic two-scale converges to $\eta\left(\omega,y\right) \gamma\left(\varUpsilon_{0}(t,x,\omega)\right)$ as $\varepsilon(t)\to 0$.\\
	 \textbf{iv) Established (\ref{c4tc5})-(\ref{c4tc6})}  \\
	 Firsly let us rewrite the system (\ref{c2tc10})-(\ref{c2tc11}) with integrals in $\Lambda\times Q$ and $\Lambda\times \Omega$, for any test functions $\phi\in \mathcal{D}(]0,T[; \mathcal{C}^{\infty}(\Lambda; V))$ and $\psi\in  \mathcal{C}^{\infty}(\Lambda; V)$ :
	 
	 \begin{equation}\label{c4tc9}
	 	\begin{array}{l}
	 		\int_{Q\times\Lambda} \chi_{\Omega_{\varepsilon}^{f}} \widetilde{\varpi_{\pm,\varepsilon}} \dfrac{\partial \phi}{\partial t}  dxdtd\mu + D_{\pm} \int_{Q\times\Lambda} \chi_{\Omega_{\varepsilon}} \nabla\widetilde{\varpi_{\pm,\varepsilon}}.\nabla\phi dxdtd\mu \\
	 		
	 		\\
	 		\quad + D_{\pm}\dfrac{F}{R\varTheta} \int_{Q\times\Lambda} \chi_{\Omega_{\varepsilon}^{f}} z_{\pm}\widetilde{\varpi_{\pm,\varepsilon}}\nabla\varUpsilon_{\varepsilon}.\nabla\phi dxdtd\mu = 0,
	 	\end{array}
	 \end{equation}
	 \vspace{0.5cm}
	 \begin{equation}\label{c4tc10}
	 	\begin{array}{l}
	 		\int_{\Omega\times\Lambda} \vartheta\left(\mathcal{T}\left(\frac{x}{\varepsilon}\right)\omega,\frac{x}{\varepsilon^{2}}\right)\nabla\varUpsilon_{\varepsilon}(t).\nabla\psi dxd\mu  \\
	 		
	 		\\
	 		\quad	+ \varepsilon \iint_{\Gamma^{sf}_{\varepsilon}\times \Lambda} \eta\left(\mathcal{T}\left(\frac{x}{\varepsilon}\right)\omega,\frac{x}{\varepsilon^{2}}\right) \gamma\left(\varUpsilon_{\varepsilon}(t)\right) \psi dS_{\varepsilon}d\mu  \\ 
	 		
	 		\\
	 		\quad	=
	 		F \int_{\Omega\times\Lambda} \chi_{\Omega_{\varepsilon}^{f}} \left(z_{+}\widetilde{\varpi_{+,\varepsilon}}(t) - z_{-}\widetilde{\varpi_{-,\varepsilon}}(t)\right)\psi dxd\mu, 
	 	\end{array}
	 \end{equation}
	 where $t>0$ is fixed in (\ref{c4tc10}). Now passing to the (stochastic) two-scale limit with the test functions of the form $\phi(t,x,\omega) = \phi_{0}(t,x,\omega) + \varepsilon\phi_{1}\left(t,x,\mathcal{T}\left(\frac{x}{\varepsilon}\right)\omega\right) + \varepsilon^{2}\phi_{2}\left(t,x,\mathcal{T}\left(\frac{x}{\varepsilon}\right)\omega, \frac{x}{\varepsilon^{2}}\right)$ in (\ref{c4tc9}) and $\psi(x,\omega) = \psi_{0}(x,\omega) + \varepsilon\psi_{1}\left(x,\mathcal{T}\left(\frac{x}{\varepsilon}\right)\omega\right) + \varepsilon^{2}\psi_{2}\left(x,\mathcal{T}\left(\frac{x}{\varepsilon}\right)\omega, \frac{x}{\varepsilon^{2}}\right)$ in (\ref{c4tc10}), we obtain the relations (\ref{c4tc5}) and (\ref{c4tc6}). Futhermore (\ref{c4tc}) follows from the stochastically two-scale convergence in the Cauchy condition (\ref{c2tc12}). \\
	 \textbf{v) Uniqueness of solutions} 
	 To end, by the same arguments as in \cite[Page 81]{gagneu1}, the uniqueness of solutions to variational problems (\ref{c4tc5})-(\ref{c4tc6}) derive from the application of monotone methods (see, e.g. \cite[Chapitre 2]{jlion2}) and from Gronwall lemma. Thus we deduce \textit{a posteriori} the convergence of the sequences $\widetilde{\varpi_{\pm,\varepsilon}}$ and $\varUpsilon_{\varepsilon}(t)$ without needing to extract subsequences.
\end{proof}	
\begin{rem}
	We deduce that the homogenized problem obtained from the Poisson weak equation (\ref{c4tc6}) reduces to :
	\begin{equation*}
		\begin{array}{r}
		-\textup{div}_{y} \left[\vartheta\left(\omega,y\right) \left(\nabla_{x}\varUpsilon_{0}(t,x,\omega) + \overline{D}_{\omega}\varUpsilon_{1}(t,x,\omega) + \nabla_{y}\varUpsilon_{2}(t,x,\omega,y)\right)\right]=0 \\
		 \textup{in}\; \Omega\times\Lambda\times Y,
		\end{array}
	\end{equation*}
	\vspace{0.2cm}
	\begin{equation*}
		\begin{array}{r}
		-\textup{div}_{\omega} \left[\int_{Y}\vartheta\left(\omega,y\right) \left(\nabla_{x}\varUpsilon_{0}(t,x,\omega) + \overline{D}_{\omega}\varUpsilon_{1}(t,x,\omega) + \nabla_{y}\varUpsilon_{2}(t,x,\omega,y)\right)dy \right]=0 \\ \textup{in}\; \Omega\times\Lambda,
	\end{array}
	\end{equation*}
	\vspace{0.3cm}
	\begin{equation*}
		\begin{array}{l}
		-\textup{div}_{x} \left[\int_{Y}\vartheta\left(\omega,y\right) \left(\nabla_{x}\varUpsilon_{0}(t,x,\omega) + \overline{D}_{\omega}\varUpsilon_{1}(t,x,\omega) + \nabla_{y}\varUpsilon_{2}(t,x,\omega,y)\right)dy \right]  \\ 
		
		\\
	  \quad	+ \gamma\left(\varUpsilon_{0}(t,x,\omega)\right)\int_{\Gamma^{sf}} \eta\left(\omega,y\right) dS(y) \\
		
		\\
		\quad  = 
			F \left(z_{+}\varpi_{+,0}(t,x,\omega) - z_{-}\varpi_{-,0}(t,x,\omega)\right) \int_{Y} \chi_{Y_{f}}(y) dy \;\; \quad \textup{in}\; \Omega\times\Lambda,
		\end{array}
	\end{equation*}
	with $t>0$ fixed.
\end{rem}
	
\begin{rem}
	As in \cite[Page 78]{gagneu1}, we have the following properties of physical equilibrium of concentrations and electrical potential, for $t>0$ : 
		\begin{equation}\label{c4tc11}
		(\varpi_{\pm,0}) \in L^{2}(\Lambda; L^{\infty}(Q_{\varepsilon}^{f}))^{2}, \quad \varpi_{\pm,0}(t,\cdot,\cdot) \geq 0 \quad \textup{in} \;\, \Omega\times\Lambda,
	\end{equation}
	
	\begin{equation}\label{c4tc12}
		\iint_{\Omega\times\Lambda} \varpi_{\pm,0}(t,x,\omega) dxd\mu = \iint_{\Omega\times\Lambda} \varpi^{0}_{\pm,0}(x,\omega) dxd\mu,
	\end{equation}
	as consequence of (\ref{c4tc5}) with the test function $\phi = 1$,
	
	\begin{equation}\label{c4tc13}
		\begin{array}{l}
		F \left(\int_{Y} \chi_{Y_{f}}(y)dy\right) \int_{\Omega\times\Lambda} \left(z_{+}\varpi_{+,\varepsilon}(t,x,\omega) - z_{-}\varpi_{-,\varepsilon}(t,x,\omega)\right) dxd\mu \\
		
		\\
		=  \int_{\Omega}\iint_{\Gamma^{sf}\times\Lambda} \eta\left(\omega,y\right) \gamma\left(\varUpsilon_{0}(t)\right)  dxdS(y)d\mu  
\end{array}
	\end{equation}
	as consequence of (\ref{c4tc6}) with the test function $\psi = 1$. Therefore, (\ref{c4tc12}) implies that the quantity
	\begin{equation*}
		\varPi(t) := \int_{\Omega}\iint_{\Gamma^{sf}\times\Lambda} \eta\left(\omega,y\right) \gamma\left(\varUpsilon_{0}(t)\right)  dxdS(y)d\mu	
	\end{equation*}
	is constant with respect to time $t$ and satisfies to the global electric equilibrium : 
	
	\begin{equation}\label{c4tc14}
		\varPi(t) =  F \left(\int_{Y}\chi_{Y_{f}}(y)dy\right) \left[- \int_{\Omega\times\Lambda} z_{+}\varpi^{0}_{+,0}(x,\omega)dxd\mu  +  \int_{\Omega\times\Lambda} z_{-}\varpi^{0}_{-,0}(x,\omega) dxd\mu \right].
	\end{equation}
\end{rem}	
		
	\begin{rem}
		Indeed, when we consider the particular dynamical system $\mathcal{T}(y)$ on $\Lambda = \mathbb{T}^{N} \equiv \mathbb{R}^{N}/\mathbb{Z}^{N}$ (the $N$-dimensional torus) defined by $\mathcal{T}(y)\omega = y+\omega\;\textup{mod}\;\mathbb{Z}^{N}$, then one can view $\Lambda$ as the unit cube in $\mathbb{R}^{N}$ with all the pairs of antipodal faces being identified. The Lebesgue measure on $\mathbb{R}^{N}$ induces the Haar measure on $\mathbb{T}^{N}$ which is invariant with respect to the action of $\mathcal{T}(y)$ on $\mathbb{T}^{N}$. Moreover, $\mathcal{T}(y)$ is ergodic and in this situation, any function on $\Lambda$ may be regarded as a periodic function on $\mathbb{R}^{N}$ whose period in each coordinate is 1. Therefore, the stochastic-periodic problem \ref{c2tc1})-(\ref{c2tc8}) of Poisson-Nernst-Plank is equivalent to the periodic reiterated problem of Poisson-Nernst-Plank and whose the non-reiterated case is treated in \cite{gagneu1}.
	\end{rem}

	\vspace{0.5cm}
	
	\textbf{Acknowledgments.}	The authors would like to thank the anonymous referee for his/her pertinent remarks, comments and suggestions.
	
	


\end{document}